\newtheorem{theorem}{Theorem}
\newtheorem{proposition}[theorem]{Proposition}
\newtheorem{lemma}[theorem]{Lemma}
\newtheorem{observation}[theorem]{Observation}
\newtheorem{corollary}[theorem]{Corollary}
\newtheorem{definition}[theorem]{Definition}
\newtheorem{claim}{Claim}[theorem]
\newtheorem{problem}{Problem}
\newcommand{\TC}{TC}
\newcommand{\cH}{\mathcal{H}}
\newcommand{\cE}{\mathcal{E}}
\newcommand{\cC}{\mathcal{C}}
\newcommand{\VC}{vc}
\newcommand{\dVC}{dvc}
\newcommand{\claimproof}{\noindent\emph{Proof of claim.} }
\newcommand{\smallqed}{{\tiny ($\Box$)}}
\begin{document}

\title{Bounding the order of a graph using its diameter\\and metric dimension\\[2mm] \large A study through tree decompositions and VC dimension}
\author{Laurent Beaudou\footnote{\noindent LIMOS - CNRS UMR 6158, Universit\'e Blaise Pascal, Clermont-Ferrand (France). E-mails: laurent.beaudou@univ-bpclermont.fr, florent.foucaud@gmail.com}
\and Peter Dankelmann\footnote{\noindent Department of Pure and Applied Mathematics, University of Johannesburg (South Africa). E-mails: \{pdankelmann,mahenning\}@uj.ac.za.}
\and Florent Foucaud\footnotemark[1]~\footnotemark[2]%{Part of this work was done while this author was a postdoctoral fellow at the University of Johannesburg.}
\and Michael A. Henning\footnotemark[2]
\and Arnaud Mary\footnote{\noindent Univ Lyon, Universit\'e Lyon 1, CNRS, LBBE - UMR 5558, F69622 (France). E-mail: arnaud.mary@univ-lyon1.fr}
\and Aline Parreau\footnote{\noindent Univ Lyon, Universit\'e Lyon 1, CNRS, LIRIS - UMR 5205, F69622 (France). E-mail: aline.parreau@univ-lyon1.fr}
}

\maketitle

\begin{abstract}
The metric dimension of a graph is the minimum size of a set of vertices such that each vertex is uniquely determined by the distances to the vertices of that set. Our aim is to upper-bound the order $n$ of a graph in terms of its diameter $d$ and metric dimension $k$. In general, the bound $n\leq d^k+k$ is known to hold. We prove a bound of the form $n=\mathcal{O}(kd^2)$ for trees and outerplanar graphs (for trees we determine the best possible bound and the corresponding extremal examples). More generally, for graphs having a tree decomposition of width $w$ and length $\ell$, we obtain a bound of the form $n=\mathcal{O}(kd^2(2\ell+1)^{3w+1})$. This implies in particular that $n=\mathcal{O}(kd^{\mathcal{O}(1)})$ for graphs of constant treewidth and $n=\mathcal{O}(f(k)d^2)$ for chordal graphs, where $f$ is a doubly-exponential function. Using the notion of distance-VC dimension (introduced in 2014 by Bousquet and Thomass\'e) as a tool, we prove the bounds $n\leq (dk+1)^{t-1}+1$ for $K_t$-minor-free graphs, and $n\leq (dk+1)^{d(3\cdot 2^{r}+2)}+1$ for graphs of rankwidth at most $r$.
\end{abstract}

\section{Introduction}

A \emph{resolving set} of a graph is a set of vertices that uniquely determines each vertex by means of the ordered set of distances to the vertices in the resolving set. The \emph{metric dimension} of the graph is the smallest size of a resolving set. These concepts, introduced independently by Slater~\cite{S75} (who called resolving sets \emph{locating sets}) and by Harary and Melter~\cite{HM76}, are widely studied since then, see for example the papers~\cite{BC11,BDJO15,CEJO00,HMMPSW10,KRR96,ST04}. More generally, they fit into the topic of \emph{identification} or \emph{separation} problems in discrete structures, such as separating systems, distinguishing sets and related concepts (for a few references, see~\cite{BS07,B72,CCCHL08,R61}). These concepts have many applications and connections to other areas. For example, the metric dimension can be applied to network discovery~\cite{BEEHHMR06,BBDGKP11}, robot navigation~\cite{KRR96}, coin-weighing problems~\cite{ST04}, $T$-joins~\cite{ST04}, the Mastermind game~\cite{C83}, or chemistry~\cite{CEJO00}.

The goal of this paper is to study the relation between the order, the diameter and the metric dimension of graphs, in particular for graphs belonging to specific graph classes.

\bigskip

\noindent\textbf{Important concepts and definitions.} All considered graphs are finite and simple. We will denote by $N[v]$, the \emph{closed neighbourhood} of vertex $v$, and by $N(v)$ its \emph{open neighbourhood} $N[v]\setminus\{v\}$. Let $d_G(u,v)$, or simply $d(u,v)$ if there is no ambiguity, denote the distance between two vertices $u$ and $v$ in graph $G$. Similarly, for two sets $X$ and $Y$ of vertices of $G$, $d_G(X,Y)$ denotes the shortest distance between a vertex of $X$ and a vertex of $Y$.

\begin{definition}
A set $R$ of vertices of a graph $G$ is a \emph{resolving set} if for each pair $u,v$ of distinct vertices, there is a vertex $x$ of $R$ with $d(x,u)\neq d(x,v)$. The smallest size of a resolving set of $G$ is the \emph{metric dimension} of $G$.
\end{definition}

A graph is said to be \emph{chordal} if it has no induced cycle of length at least~$4$. A graph is \emph{planar} if it has an embedding in the plane that induces no edge-crossing. It is outerplanar if it is planar and has an embedding in the plane where each vertex lies on the outer face. A \emph{minor} of a graph is a graph obtained by a succession of vertex- and edge-deletions and edge-contractions. We say that a graph $G$ is $H$-minor-free if $H$ is not a minor of $G$. By the Graph Minor Theorem~\cite{seymour_minor_20}, any minor-closed class of graphs (such as the classes of planar graphs, outerplanar graphs or graphs with treewidth at most~$w$) is defined by a finite set of forbidden minors.

\bigskip

\noindent\textbf{Previous work.} One can easily observe that in a graph $G$ of diameter $d$ and with metric dimension $k$ and $n$ vertices, we have the bound $n\leq d^k+k$~\cite{CEJO00,KRR96}. Indeed, given a resolving set $R$ of size $k$, every vertex outside of $R$ can be associated to a distinct vector of length $k$ and values ranging from $1$ to $d$. This trivial bound, however, is only tight for $d\leq 3$ or $k=1$~\cite{HMMPSW10}. Nevertheless, the more precise (and tight) bound $n\leq \left(\lfloor 2d/3\rfloor+1\right)^k+k\sum_{i=1}^{\lceil d/3\rceil}(2i-1)^{k-1}$ is given in~\cite{HMMPSW10}. It is natural to ask for which kind of graphs a bound of this form is tight. We therefore wish to study the following problem.

\begin{problem}\label{problem} Given a graph class $\cC$, determine the largest possible order of a graph in $\cC$ having metric dimension $k$ and diameter $d$.
\end{problem}

This problem was considered by the third and fifth author, together with Mertzios, Naserasr and Valicov~\cite{part1}. These authors studied interval graphs and permutation graphs, and proved bounds of the form $n=\mathcal{O}(dk^2)$. These bounds were shown to be best possible (up to constant factors). In the case of unit interval graphs, bipartite permutation graphs and cographs, it was proved in the same paper that $n=\mathcal{O}(dk)$.

Surprisingly, the above problem seems to have not been studied even for trees, despite the fact that the metric dimension of trees is well understood (see~\cite{CEJO00,KRR96,S75}). In this paper, we answer this question. We extend our result for trees in two ways. First, we give bounds involving the length and width of a tree decomposition of the graph. Second, we study graphs that have bounded \emph{distance-VC dimension}. (These notions will be defined in the corresponding sections of the paper.)

As further recent work related to this paper, we remark that the metric dimension of $t$-trees has recently been investigated in~\cite{BDJO15}, and the treelength of a graph has recently been used to design algorithms to compute the metric dimension~\cite{BFGR15}. Algorithms and complexity results regarding the computation of the metric dimension of graphs belonging to graph classes considered in the present paper, can be found in~\cite{DPSV12,ELW12j,part2}.

\bigskip

\noindent\textbf{Our results and structure of the paper.} In the first part of the paper, Section~\ref{sec:TW}, we study trees and generalize our method using the tool of tree decompositions. We start in Section~\ref{sec:trees} by an exact bound of the form $n=(\tfrac{1}{8}+o(1))d^2k$ for trees of order $n$, metric dimension~$k$ and diameter~$d$, and we characterize the trees reaching our bound. We then show in Section~\ref{sec:treedec} that a graph with a tree decomposition of width $w$ and length $\ell$ satisfies $n=\mathcal{O}(kd^2(2\ell+1)^{3w+1})$. %[To check]
This implies the bound $n=\mathcal{O}(k2^{3w}d^{3w+3})$ for graphs of treewidth at most $w$ and $n=\mathcal{O}(kd^23^{3\omega-2})$ for chordal graphs with maximum clique $\omega$.

The second part of the paper, Section~\ref{sec:VC}, is devoted to the use of the distance-VC dimension. We first show (using the notion of test covers), how the VC dimension of the ball hypergraph of a graph can be used to derive a general bound on the order using the diameter and the metric dimension. We then bound the dual distance-VC dimension of $K_t$-minor-free graphs and graphs of rankwidth at most~$r$, which implies the bounds $n\leq (dk+1)^{t-1}+1$ and $n\leq (dk+1)^{d(3\cdot 2^{r}+2)}+1$, respectively. In particular, this shows that for planar graphs, we have $n\leq (dk+1)^4$; this partially answers an open question from~\cite{part1}. We then use a completely different method in Section~\ref{sec:outerplanar} to prove that $n=\mathcal{O}(kd^2)$ for outerplanar graphs, which we show to be tight.

Finally, we conclude in Section~\ref{sec:conclu} with some open questions.

\section{Trees and graphs with specific tree decompositions}\label{sec:TW}

We first study Problem~\ref{problem} for graphs admitting specific types of tree decompositions. We start with trees, which form the class of nontrivial graphs that is the simplest (with respect to tree decompositions).

\subsection{Trees}\label{sec:trees}

We first give the constructions of some extremal trees. See Figure~\ref{fig:extr-trees} for illustrations.

For $r \in \mathbb{N}$ let $L_r$ be the rooted tree obtained from
a path $v_0,v_1,\ldots,v_r$ rooted at $v_0$ by attaching a path of
length $r-i$ to vertex $v_i$ for $i=1,2,\ldots,r-1$. Denote a path
of length $r$ rooted at one of its end vertices by $P_r^*$.
Let $k\in \mathbb{N}$ with $k\geq 2$.

For even $d\in \mathbb{N}$
we define the \emph{hairy spider}
$HS_{d,k}$ as the tree obtained from $k$ disjoint copies
of $L_{d/2}$ and a path $P_{d/2}^*$ by identifying their roots to a vertex $v$.
For odd $d \in \mathbb{N}$ with $d\geq 3$ and for $a\in \mathbb{N}$ with
$0\leq a \leq k$ we define
$HS_{d,k,a}$ as the tree obtained from $k$ disjoint copies of $L_{(d-1)/2}$,
two copies of $P_{(d-1)/2}^*$, and a path on two vertices, $u$ and $w$,
by identifying the roots of $a$ copies of $L_{(d-1)/2}$ and of one
copy of $P_{(d-1)/2}^*$ with $u$, and the roots of the
remaining $k-a$ copies of $L_{(d-1)/2}$ and
the root of the other copy of $P_{(d-1)/2}^*$ with $w$.

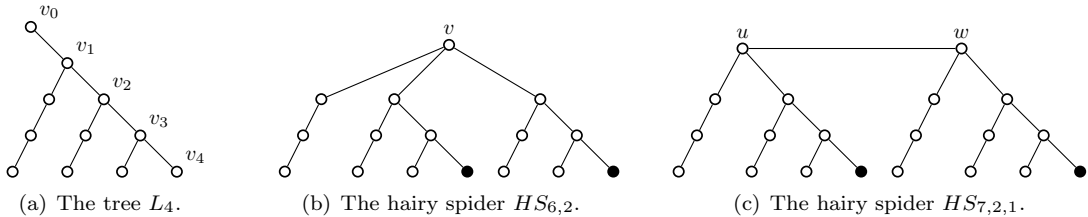
\begin{figure}[ht]
  \centering
  \subfigure[The tree $L_4$.]{
    \scalebox{0.8}{
      \begin{tikzpicture}[join=bevel,inner sep=0.6mm,line width=0.8pt, scale=0.3]

        \foreach \i in {0,...,4}{
          \path (2*\i,8-2*\i) node[draw, shape=circle] (\i-0) {};
          \draw (\i-0) node[above right=0.1cm] {$v_\i$};
          \ifnum \i>0
          \pgfmathtruncatemacro{\ii}{\i-1}
          \draw[line width=0.5pt] (\i-0)--(\ii-0);

          \ifnum \i<4
          \pgfmathtruncatemacro{\k}{4-\i}
          \foreach \j in {1,...,\k}{
            \path (2*\i-\j,8-2*\i-2*\j) node[draw, shape=circle] (\i-\j) {};
            \pgfmathtruncatemacro{\jj}{\j-1}
            \draw[line width=0.5pt] (\i-\j)--(\i-\jj);
          }
          \fi
          \fi
        }

  \end{tikzpicture}}}\qquad
  \subfigure[The hairy spider $HS_{6,2}$.]{
    \scalebox{0.8}{
      \begin{tikzpicture}[join=bevel,inner sep=0.6mm,line width=0.8pt, scale=0.3]

        \path (3,11) node[draw, shape=circle] (r) {};
        \draw (r) node[above=0.1cm] {$v$};

        \foreach \i in {0,...,2}{
          \path (-4-\i,8-2*\i) node[draw, shape=circle] (p-\i) {};
          \ifnum \i>0
          \pgfmathtruncatemacro{\ii}{\i-1}
          \draw[line width=0.5pt] (p-\i)--(p-\ii);
          \fi
        }

        \draw[line width=0.5pt] (p-0)--(r);

        \foreach \a in {0,1}{

          \foreach \i in {0,...,2}{

            \ifnum \i<2
            \path (\a*8+2*\i,8-2*\i) node[draw, shape=circle] (\a-\i-0) {};
            \fi

            \ifnum \i=2
            \path (\a*8+2*\i,8-2*\i) node[draw, shape=circle,fill] (\a-\i-0) {};
            \fi

            \pgfmathtruncatemacro{\ii}{\i-1}
            \ifnum \i>0
            \draw[line width=0.5pt] (\a-\i-0)--(\a-\ii-0);
            \fi

            \ifnum \i<2
            \pgfmathtruncatemacro{\k}{2-\i}
            \foreach \j in {1,...,\k}{
              \path (\a*8+2*\i-\j,8-2*\i-2*\j) node[draw, shape=circle] (\a-\i-\j) {};
              \pgfmathtruncatemacro{\jj}{\j-1}
              \draw[line width=0.5pt] (\a-\i-\j)--(\a-\i-\jj);
            }
            \fi
          }
          \draw[line width=0.5pt] (\a-0-0)--(r);
        }

  \end{tikzpicture}}}\qquad
  \subfigure[The hairy spider $HS_{7,2,1}$.]{\scalebox{0.8}{\begin{tikzpicture}[join=bevel,inner sep=0.6mm,line width=0.8pt, scale=0.3]

        \path (-2.5,10.8) node[draw, shape=circle] (r0) {};
        \draw (r0) node[above=0.1cm] {$u$};
        \path (9.5,10.8) node[draw, shape=circle] (r1) {};
        \draw (r1) node[above=0.1cm] {$w$};

        \draw[line width=0.5pt] (r0)--(r1);

        \foreach \a in {0,1}{

          \foreach \i in {0,...,2}{
            \path (\a*12+-4-\i,8-2*\i) node[draw, shape=circle] (\a-p-\i) {};
            \ifnum \i>0
            \pgfmathtruncatemacro{\ii}{\i-1}
            \draw[line width=0.5pt] (\a-p-\i)--(\a-p-\ii);
            \fi
          }

          \draw[line width=0.5pt] (\a-p-0)--(r\a);

          \foreach \i in {0,...,2}{

            \ifnum \i<2
            \path (\a*12+2*\i,8-2*\i) node[draw, shape=circle] (\a-\i-0) {};
            \fi

            \ifnum \i=2
            \path (\a*12+2*\i,8-2*\i) node[draw, shape=circle,fill] (\a-\i-0) {};
            \fi

            \pgfmathtruncatemacro{\ii}{\i-1}
            \ifnum \i>0
            \draw[line width=0.5pt] (\a-\i-0)--(\a-\ii-0);
            \fi

            \ifnum \i<2
            \pgfmathtruncatemacro{\k}{2-\i}
            \foreach \j in {1,...,\k}{
              \path (\a*12+2*\i-\j,8-2*\i-2*\j) node[draw, shape=circle] (\a-\i-\j) {};
              \pgfmathtruncatemacro{\jj}{\j-1}
              \draw[line width=0.5pt] (\a-\i-\j)--(\a-\i-\jj);
            }
            \fi
          }
          \draw[line width=0.5pt] (\a-0-0)--(r\a);
        }

  \end{tikzpicture}}}
  \caption{Extremal trees. Black vertices form optimal resolving sets.}
  \label{fig:extr-trees}
\end{figure}

\begin{theorem}\label{thm:trees}
Let $T$ be a tree of diameter $d$ and metric dimension $k$, where
$k\geq 2$. Then
\[ |V(T)| \leq \left\{ \begin{array}{cc}
     \frac{1}{8}(kd+4)(d+2) & \textrm{if $d$ is even,} \\
      \frac{1}{8}(kd-k+8)(d+1) & \textrm{if $d$ is odd}.
      \end{array} \right. \]
Equality holds for even $d$ if and only if $T=HS_{d,k}$, and for odd
$d$ if and only if $T=HS_{d,k,a}$ for some integer $a$ with $0<a<k$.
\end{theorem}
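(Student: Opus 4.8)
\noindent\emph{Proof idea.} The plan is to root $T$ at its centre and to bound, one level at a time, the number of vertices at each prescribed distance from the centre; the metric dimension enters only through the elementary fact that if two children of a vertex $x$ have resolving-set-free subtrees then those two children are not separated. In particular the upper bound will not require the classical formula for the metric dimension of a tree.

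First I would treat even $d=2\rho$. Then $T$ has a unique central vertex $c$ of eccentricity $\rho$; rooting $T$ at $c$, every vertex lies at depth at most $\rho$, so if $n_i$ denotes the number of vertices at depth $i$ then $|V(T)|=\sum_{i=0}^{\rho}n_i$ and $n_0=1$. The crux is the inequality $n_i-n_{i-1}\le k$ for every $i\ge1$. To see it, fix a resolving set $R$ with $|R|=k$: if some vertex $x$ had two children $z,z'$ whose subtrees both avoided $R$, then every $w\in R$ would lie outside both subtrees and hence satisfy $d(w,z)=d(w,x)+1=d(w,z')$, so $R$ would fail to separate $z$ from $z'$. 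Therefore at most one child-subtree of $x$ avoids $R$. Now fix the level $i-1$: the subtrees rooted at the children of the level-$(i-1)$ vertices are pairwise disjoint, and a vertex $x$ of that level with $c_x$ children has at least $c_x-1$ of them meeting $R$, so over the whole level the number of $R$-meeting child-subtrees is at most $|R|=k$; since $n_i-n_{i-1}=\sum_x(c_x-1)$ over the level-$(i-1)$ vertices $x$, which is at most the same sum restricted to those $x$ with $c_x\ge2$, we get $n_i-n_{i-1}\le k$. Iterating from $n_0=1$ gives $n_i\le ki+1$, and summing,
\[
|V(T)|\le\sum_{i=0}^{\rho}(ki+1)=\tfrac{k\rho(\rho+1)}{2}+\rho+1=\tfrac18(kd+4)(d+2),
\]
using $\rho=d/2$.

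For odd $d=2\rho-1$ I would run exactly the same argument with the central edge $c_1c_2$ in place of $c$. Deleting $c_1c_2$ splits $T$ into subtrees $T_1\ni c_1$ and $T_2\ni c_2$, the eccentricity bound forces every vertex of $T_j$ to be within distance $\rho-1$ of $c_j$, and, writing $n_i^{j}$ for the number of vertices of $T_j$ at distance $i$ from $c_j$, one has $|V(T)|=\sum_{i=0}^{\rho-1}(n_i^{1}+n_i^{2})$ and $n_0^{1}+n_0^{2}=2$. Since the child-subtrees on the two sides are again pairwise disjoint, the same counting yields $(n_i^{1}+n_i^{2})-(n_{i-1}^{1}+n_{i-1}^{2})\le k$, whence $n_i^1+n_i^2\le ki+2$ and
\[
|V(T)|\le\sum_{i=0}^{\rho-1}(ki+2)=\tfrac{k\rho(\rho-1)}{2}+2\rho=\tfrac18(kd-k+8)(d+1).
\]

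Finally I would settle the extremal cases. A direct computation shows that $HS_{d,k}$ (for $k\ge2$) and the trees $HS_{d,k,a}$ with $0<a<k$ have diameter $d$, metric dimension exactly $k$, and the stated order — the hypotheses $k\ge2$, respectively $0<a<k$, being precisely what keeps the gluing vertex ($v$, resp.\ $u$ and $w$) from acquiring a second pendant path, which would otherwise push the metric dimension to $k+1$. For the converse, equality in the bound forces $n_i-n_{i-1}=k$ (resp.\ the two-sided identity) at \emph{every} level, which in turn forces: no leaf occurs before the deepest level; at each level the vertices with at least two children account for exactly $k$ units of $\sum(c_x-1)$; and, for every level simultaneously, all of $R$ lies in the child-subtrees of the branching vertices of the previous level. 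Applying the last point at the deepest level shows that every minimum resolving set consists entirely of leaves at maximum depth, each a child of a branching vertex; feeding this back, an induction from the root shows that each branching vertex has exactly two children, exactly one of which heads an $R$-free subtree, that these $R$-free subtrees are paths reaching the deepest level, and that the spines carrying the single landmark of each branch reproduce $L_{\rho}$ (even case, together with one extra landmark-free pendant path $P^{*}_{\rho}$ at $c$) or $L_{\rho-1}$ (odd case), the count of active branches on the two sides giving the parameter $a$, which the metric-dimension count pins into $0<a<k$; one then reads off $T=HS_{d,k}$, respectively $T=HS_{d,k,a}$. The per-level estimate and its summation are short and robust; the step I expect to be the real obstacle is this equality analysis, which must juggle the level identities for all $i$ at once together with the fact that a minimum resolving set is concentrated at the deepest level. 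The degenerate small cases (most notably $d=2$, where $T$ must be a star and the bound reads $|V(T)|\le k+2$, attained by $K_{1,k+1}=HS_{2,k}$) and the verification that the claimed extremal trees meet the bound are routine and I would dispatch them first.
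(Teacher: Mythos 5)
Your upper-bound argument is correct and reaches exactly the right constants, but it proceeds by a genuinely different decomposition than the paper's. You count BFS layers from the centre and prove the per-level increment bound $n_i-n_{i-1}\le k$ from the observation that each vertex has at most one child whose subtree misses the resolving set $R$, the disjointness of the child-subtrees then charging each unit of $\sum_x(c_x-1)$ to a distinct element of $R$. The paper instead builds the skeleton $T_S$, the union of the shortest paths from the centre to the landmarks, shows every branch hanging off $T_S$ is a pendant path whose length is capped by the radius minus its attachment depth, and sums path lengths along each $P_i$. Your route is arguably more elementary for the inequality itself (it never needs to argue that the hanging pieces are paths, only that at most one child per vertex is ``unseen''), and the two summations are just transposes of one another: you sum $ki+1$ over levels, the paper sums $r-j+1$ along each landmark path. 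Where the paper's decomposition pays off is the equality analysis: since $T_S$ plus maximal pendant paths is already the shape of $HS_{d,k}$, tightness immediately forces the extremal tree, whereas your route must convert the level identities into structure. Your sketch of that conversion is on the right track --- equality forces every element of $R$ to sit at depth exactly $\rho$, every ancestor of a landmark to be branching with exactly one $R$-free child-subtree, and every $R$-free subtree to be a path reaching depth $\rho$, which does reassemble into $L_\rho$ --- but as written it needs two repairs: the claim that ``each branching vertex has exactly two children'' fails at the centre (the central vertex must acquire $k+1$ children in the even case, and $u,w$ acquire $a+1$ and $k-a+1$ in the odd case, which is exactly where the parameter $a$ and the exclusion of $a\in\{0,k\}$ come from), and you should state the count in the form ``$c_x$ equals one plus the number of landmarks strictly below $x$'' so that the induction closes at every level simultaneously. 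Both you and the paper leave the verification that $HS_{d,k}$ and $HS_{d,k,a}$ ($0<a<k$) actually have metric dimension $k$ as routine; it is needed for the ``if'' direction and should be recorded.
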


\begin{proof} Let $S=\{ x_1,x_2,\ldots,x_k\}$ be a
resolving set for $T$. Let $C$ be
the set of {\em central vertices} of $T$, that is, the set of vertices of $T$ that minimize the maximum distance to all the other vertices of the tree. For $i=1,2,\ldots,k$, let $P_i$
be the shortest path from $C$ to $x_i$.
Let $r = \max_{v\in V(T)} d_T(v,C)$, so if $d$ is even then $r=d/2$ and $r$
is the radius of $T$, and if $d$ is odd then $r=(d-1)/2$ since for odd
$d$ every vertex is within distance $(d-1)/2$ of the nearest central vertex
of $T$.
Define the subtree $T_S$ of $T$ by
\begin{equation} \label{eq:define-T_S}
T_S = T[C] \cup \bigcup_{i=1}^k P_i.
\end{equation}
For $v\in V(T_S)$ we define $T_v$ to be the largest subtree
of $T$ containing $v$ and no other vertex of $T_S$. In
other words, $T_v$ is the union of all branches of $T$ at $v$
not containing any edge of $T_S$. Possibly,  $T_v=K_1$. 
We first show that for every vertex $v$ of $T_S$,
\begin{equation}  \label{eq:every-branch-is-a-path}
\textrm{$T_v$ is a path with $v$ as an end-vertex.}
\end{equation}
We first show that $v$ is an end-vertex (that is, a leaf) of $T_v$.
Indeed, if $v$ had two neighbours in $T_v$, then they would have the same
distance to every vertex in $S$, and so $S$ would not resolve them,
a contradiction. The same argument shows that no vertex of
$T_v$ has degree greater than two. This shows \eqref{eq:every-branch-is-a-path}. \\[1mm]
Hence $T$ is obtained from $T_S$ by appending a path on $|V(T_v)|-1$
vertices to $v$ for all $v\in V(T_S)$.
We now bound the length of this path by showing that
\begin{equation} \label{eq:bound-on-T_v}
|V(T_v)| \leq r - d_T(v,C)+1.
\end{equation}
Let $v'$ be the end vertex of $T_v$ with $v'\neq v$.
Then $d_T(v',v) =|V(T_v)|-1$. Hence
\[ r    \geq d_T(v',C)
        =   d_T(v',v) +  d_T(v,C)
        =  |V(T_v)|-1 + d_T(v,C), \]
and \eqref{eq:bound-on-T_v} follows.

From \eqref{eq:define-T_S} we obtain
\begin{eqnarray}
n & = & \sum_{v\in V(T_S)} |V(T_v)| \nonumber \\
   & \leq & \sum_{v\in C} |V(T_v)| +   \sum_{i=1}^k \sum_{w\in V(P_i)-C} |V(T_w)|.
        \label{eq:4}
\end{eqnarray}
By \eqref{eq:bound-on-T_v} we have $|V(T_v)|\leq r+1$ for all $v\in C$.
The vertices of $P_i$ are at distance $0,1,\ldots,\ell_i$ from $C$ in $T$, where
$\ell_i$ is the length of $P_i$. Hence, by \eqref{eq:bound-on-T_v} and $\ell_i\leq r$
we get
\begin{equation}
 \sum_{w\in V(P_i)-C} |V(T_w)|
       \leq \sum_{j=1}^{\ell_i} (r-j+1)
       \leq \sum_{j=1}^r (r-j+1)
       = \frac{1}{2}r(r+1).         \label{eq:bound2-on-T_v}
\end{equation}
In total we obtain
\begin{equation}
n \leq |C|(r+1) + \frac{k}{2}r(r+1).    \label{eq:total-bound-on-n}
\end{equation}
If $d$ is even, then $r=\frac{d}{2}$ and $C$ contains only one vertex. Hence
\[ n \leq (r+1) +  \frac{k}{2}r(r+1) = \frac{kr+2}{2}(r+1)  = \frac{(kd+4)(d+2)}{8}, \]
and the desired bound follows in this case. If $d$ is odd, then $r=\frac{d-1}{2}$
and $C$ contains exactly two vertices. Hence
\[ n \leq 2(r+1) + \frac{k}{2}r(r+1) = \frac{kr+4}{2}(r+1) = \frac{(kd-k+8)(d+1)}{8}, \]
and the desired bound follows also in this case.

Now assume that $T$ is a tree of diameter $d$ and metric dimension $k$ attaining
the bound. Then equality holds also in~\eqref{eq:total-bound-on-n}, in
\eqref{eq:4}--\eqref{eq:bound2-on-T_v}. So the paths $P_i$ share
no vertices other than central vertices, and each path has length $r$.
Moreover, equality in \eqref{eq:bound2-on-T_v} implies that for the vertices
$v$ of $P_i$, the trees $T_v$ have order $2,3,\ldots,r-1$, respectively, for each $i$.
Equality in \eqref{eq:total-bound-on-n} implies also that for each central vertex
$v$ the tree $T_v$ has $r+1$ vertices.
In total it follows that $T = HS_{d,k}$ if $d$ is even, and, if $d$ is odd, that $T = HS_{d,k,a}$ for
some $a\in \{0,1,\ldots,k\}$. It is easy to see that the trees $HS_{d,k,0}$ and
$HS_{d,k,k}$ have metric dimension $k+1$, so we conclude that $T=HS_{d,k,a}$
for some $a\in \{1,2,\ldots,k-1\}$.
\end{proof}

\subsection{Using tree decompositions}\label{sec:treedec}

We now generalize our result for trees to graphs with tree decompositions of given width and length. These results also generalize results of~\cite{part1} for interval graphs and permutation graphs (which have treelength at most~$1$ and~$2$, respectively~\cite{BFGR15}).

We first recall the definition of {\em tree decomposition} introduced by
Robertson and Seymour~\cite{seymour_minor_2}. We shall copy the
definition given by Dourisboure and Gavoille~\cite{DG05} which is
slightly lighter in terms of indices.

\begin{definition}[Tree decomposition \cite{DG05}]
  \label{def:tree_decomposition}
  Let $G$ be a graph. A {\em tree decomposition} of $G$ is a tree $T$
  whose vertices, called {\em bags}, are subsets of $V(G)$ such that
  the following properties are satisfied.

  \begin{enumerate}
    \item[(P1)] $\bigcup_{X \in V(T)} X = V(G)$,
    \item[(P2)] for every edge $e$ of $G$, there exists $X$ in $V(T)$
      such that both ends of $e$ are in $X$, and
    \item[(P3)] for $X,Y$ and $Z$ in $V(T)$, if $Y$ lies on the path in
      $T$ from $X$ to $Z$, then $X \cap Z \subseteq Y$.
  \end{enumerate}
\end{definition}

As mentioned in~\cite{DG05}, property (P3) of Definition~\ref{def:tree_decomposition} implies that, for any vertex $x$ in
$V(G)$, the set of bags containing $x$ induces a subtree of $T$. The classic {\em width} parameter of a tree decomposition is defined as $\max\{|X|-1 \,\colon X \in V(T)\}$. For any bag $X$ in $V(T)$, the
{\em diameter} of $X$ is the maximum distance $d_G(x,y)$ over every
pair of vertices $x$ and $y$ in $X$. (Note that here the distance is taken in $G$, and not in $G[X]$.) The {\em length} of a tree
decomposition is the largest diameter of a bag over every bag $X$ in $V(T)$~\cite{DG05}. The {\em treewidth} (respectively {\em treelength}) of a graph $G$ is the minimum width (resp. length) among all tree decompositions of $G$.

A tree decomposition is {\em reduced} if no bag is a subset of another
bag. One may easily check that any tree decomposition can be turned
into a reduced tree decomposition by removing the bags which are not
maximal with respect to inclusion and without altering the width and the length of the decomposition.

 A \emph{cutset} of a graph $G$ is a set of vertices in $G$
   whose removal increases the number of components. We shall prove the following theorem.

\begin{theorem}
Let $G$ be a graph of order $n$ and diameter $d$. Let $T$ be a reduced tree decomposition of $G$ of length $\ell$ and width $w$. If there is a resolving set of size $k$ in $G$, then
  \begin{equation*}
    n = \mathcal{O}(kd^2(2\ell+1)^{3w+1}).
%\leq (2k-1) (\ell+1)(2\ell+1)^{2w+1}(d+1).
  \end{equation*}
  \label{thm:main}
\end{theorem}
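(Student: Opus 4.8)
The plan is to mimic the structure of the proof of Theorem~\ref{thm:trees}, but with the tree decomposition $T$ playing the role of the tree itself. Fix a resolving set $S = \{x_1,\dots,x_k\}$ and, for each bag $X \in V(T)$, think of $X$ as a ``fat vertex'' of a tree-like skeleton. As in the tree case, I would first identify a suitable ``central bag'' $X_0$ of $T$ (choosing $X_0$ to be a bag whose removal from $T$ leaves subtrees each of which projects onto a small-diameter region of $G$; such a bag exists by a standard centroid argument on trees), and then, for each $x_i$, take the path $Q_i$ in $T$ from $X_0$ to a bag containing $x_i$. Let $T_S$ be the subtree of $T$ spanned by $X_0$ together with all the $Q_i$; this subtree has at most $k$ leaves, so it has at most $k$ ``branch vertices'' and consists of at most $2k$ paths of bags. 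The first key step is to show that the portion of $G$ covered by bags outside $T_S$ is small: a subtree $T'$ of $T$ hanging off $T_S$ at a bag $X$ is separated from the rest of $G$ by the cutset $X$ (this is exactly property (P3) of a tree decomposition, which is why we need $G[X]$-free cutsets), and since $T'$ can contain no vertex of $S$, every vertex covered only by $T'$ must be resolved by the vectors of distances \emph{through} $X$. Because $|X| \le w+1$ and each distance ranges over $d$ values, there are at most $d^{w+1}$ such vertices per ``hanging subtree''; the subtlety is that there may be many hanging subtrees, so one has to bound the number of \emph{distinct} bags $X$ at which they attach along $T_S$.

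The second, and I expect the hardest, step is to control how far along each of the $\mathcal{O}(k)$ paths of bags in $T_S$ one can travel before the distance information saturates --- this is where the length $\ell$ and the $(2\ell+1)^{3w}$-type factor must come from. The idea is that, along a path $X_0 = Y_0, Y_1, Y_2, \dots$ of bags in $T_S$, consecutive bags overlap in a cutset, and the diameter of each bag is at most $\ell$; so the distance from a fixed reference vertex to the ``frontier'' cutset $Y_j \cap Y_{j+1}$ can only change by a bounded amount (on the order of $\ell$) as $j$ increases by one. One should be able to encode, for each bag $Y_j$ along the path, the vector of distances from the $\le w+1$ vertices of the separating cutset to (a representative of) each of the $k$ classes of $S$ reached through the other end of the path; two bags $Y_j$, $Y_{j'}$ with the same such ``distance profile'' and the same ``offset'' relative to the reference point cannot both exist if the region between them is nonempty, for otherwise $S$ fails to resolve a pair of vertices in that region, exactly as in the tree argument where two bags at equal distance from $C$ with the same hanging-path behaviour are forbidden. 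Counting the number of possible profiles: there are $\le w+1$ cutset vertices, $\le w+1$ relevant target-representatives in the adjacent bag, so on the order of $w^2$ pairs, each contributing a distance-difference in a window of size $\mathcal{O}(\ell)$, i.e. $(2\ell+1)^{\mathcal{O}(w^2)}$ profiles --- I will need to be more careful to get the stated exponent $3w+1$ rather than something quadratic in $w$, presumably by observing that only the distances \emph{from} the $\le w+1$ cutset vertices matter (giving an exponent linear in $w$, times a constant like $3$ from handling both endpoints of a path-interval plus the attached hanging subtrees).

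Putting the pieces together: the number of bags in $T_S$ is $\mathcal{O}(k)$ times the maximal length of a path of distinct-profile bags, which is $\mathcal{O}(k \cdot d \cdot (2\ell+1)^{3w})$ --- the extra factor $d$ accounting for the range of the ``offset'' coordinate, analogous to the $r \sim d/2$ summation in \eqref{eq:bound2-on-T_v}. Each bag covers $\le w+1 = \mathcal{O}(1 \text{ in } w)$ vertices directly, and each contributes $\mathcal{O}(d^{w+1})$ further vertices through hanging subtrees; absorbing $d^{w+1}$ and the $\mathcal{O}(k)$ from the branch structure, and being slightly generous with the exponent, one reaches
\[
  n = \mathcal{O}\bigl(k\,d^2\,(2\ell+1)^{3w+1}\bigr),
\]
as claimed. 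The main obstacle, as flagged above, is the bookkeeping in the middle step: defining the ``distance profile'' of a bag precisely enough that (a) two bags with equal profile genuinely force a resolving-set violation, and (b) the number of profiles is genuinely $(2\ell+1)^{O(w)}$ with the right constant in the exponent, rather than a weaker bound. I would handle (a) by a direct pigeonhole on pairs of vertices living strictly between two equal-profile bags, using (P3) to route all their distances through the two bounding cutsets, and (b) by a careful choice of which distances to record (only from cutset vertices, only modulo a common reference, only within a window of width $2\ell$).
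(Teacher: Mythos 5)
Your overall architecture matches the paper's: a skeleton subtree $T_S$ of the tree decomposition with $\mathcal{O}(k)$ leaves and hence $\mathcal{O}(k)$ threads, hanging subtrees whose vertices are resolved only through the separating bag at which they attach, and a bound on the length of each thread. However, there are two concrete gaps, one of which makes the stated bound unreachable as written.

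First, your count of $d^{w+1}$ vertices per hanging subtree is too coarse and cannot be ``absorbed'' into $\mathcal{O}(kd^2(2\ell+1)^{3w+1})$: that would require $d^w = \mathcal{O}((2\ell+1)^w)$, which fails whenever $\ell \ll d$ (e.g.\ chordal graphs have $\ell=1$ and arbitrary diameter). The missing idea is to exploit the length of the decomposition inside the separator itself: fix one ``pin'' vertex of the attaching bag $X$, whose distance to a vertex of the hanging part ranges over at most $d+1$ values, and observe that the distances to the other $\leq w$ vertices of $X$ can each differ from the pin distance by at most $\ell$ (since $X$ has diameter $\leq\ell$ in $G$). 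This gives $(d+1)(2\ell+1)^w$ distinct distance vectors, hence at most that many vertices per hanging subtree, which is exactly what produces one factor $d$ and the $(2\ell+1)^w$ part of the exponent.

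Second, your middle step (bounding the thread length via ``distance profiles'' of bags) is the part you flag as hardest, and indeed as sketched it does not close: the profile count you describe is $(2\ell+1)^{\mathcal{O}(w^2)}$, and the pigeonhole ``two equal-profile bags force a resolving failure'' is applied to bags rather than to vertices, which is where it breaks down. The paper's argument is different and avoids profiles entirely. It shows that if a single vertex $x$ lies in $t+1$ consecutive bags $X_i,\ldots,X_{i+t}$ of the thread, then $\bigcup_{z=i}^{i+t}X_z$ is separated from all of $S$ by $X_i\cup X_{i+t}$, so its vertices need pairwise distinct distance vectors to the $\leq 2w+1$ vertices of $X_i\cup X_{i+t}$; since $x$ belongs to every one of these bags, all such distances are at most $2\ell$, giving $|\bigcup_z X_z|\leq(\ell+1)(2\ell+1)^{2w}$. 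Then \emph{reducedness} of the decomposition (each bag introduces a vertex not in the earlier bags of the thread) converts this vertex bound into the statement that no vertex survives more than $(\ell+1)(2\ell+1)^{2w}$ consecutive bags; finally, walking a shortest path of length $\lambda\leq d$ in $G$ between the two ends of the thread, each edge can advance the ``last bag containing the current vertex'' index by at most that amount, yielding $L\leq(\lambda+1)(\ell+1)(2\ell+1)^{2w}$. You never use the hypothesis that the decomposition is reduced, and without it there is no way to bound the \emph{number of bags} along a thread (one could subdivide a thread into arbitrarily many redundant bags). These two ingredients --- the pin-point count $(d+1)(2\ell+1)^w$ and the reducedness-plus-shortest-path argument --- are what produce the factors $d^2$ and the exponent $3w+1$, and both are absent from your proposal.
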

%\todo[inline]{Flo: on avait des arguments plus fins pour les chordaux/$2$-arbres, je sais pas si ça vaut le coup?}
%\todo[inline]{Aline:How interesting is this bound ? It would be nice to have examples of large chordal graphs with small resolving sets or to refine it...}

\begin{proof}
Let $G$ be a graph of diameter $d$ with a resolving set $S$ of size
$k$. Let $T$ be a tree decomposition of $G$ with length $\ell$ and
width $w$. The following claim is easily derived from the definition of a tree decomposition.

\begin{claim}
  \label{prop:cutset}
  Every bag $X$ which is not a leaf in $T$ is a cutset for $G$.
\end{claim}
%% \begin{proof}
%%   Let $X$ be a vertex of $T$ with degree greater than or equal to
%%   2. Then it is a cutting vertex in $T$. The graph $T - X$ is a forest
%%   with $t$ trees $T_1, T_2, \ldots, T_t$ where $t$ is greater than or
%%   equal to 2 (actually, $t$ is the degree of $X$ in $T$).
%%
%%   For every index $i$ between 1 and $t$, let $V_i$ be the union of the
%%   bags of $T_i$ minus $X$,
%%   \begin{equation*}
%%     V_i = \bigcup_{Y \in V(T_i)}Y \setminus X.
%%   \end{equation*}
%%   Since the tree decomposition is reduced, every $V_i$ is
%%   non-empty. Moreover, the subtree structure tells us that they are
%%   pairwise disjoint (if a vertex $x$ of $G$ is in two distinct such
%%   sets, it should also be in $X$ which is impossible).
%%
%%   We claim that $V_1,V_2,\ldots, V_t$ form a $t$-partition of $G -
%%   X$. For a contradiction, suppose there is an edge between two such
%%   sets. Without loss of generality, we may assume that there is $x_1$
%%   in $V_1$ and $x_2$ in $V_2$ such that $x_1$ and $x_2$ are adjacent
%%   in $G$. By property (P2) of Definition~\ref{def:tree_decomposition},
%%   there must be a bag $Y$ in $V(T)$ containing both $x_1$ and
%%   $x_2$. Since $x_1$ is in $V_1$, there is a bag $X_1$ in $T_1$
%%   containing $x_1$. Since the bags containing $x_1$ induce a subtree
%%   of $T$ and $X$ does not contain $x_1$, the bag $Y$ must be a vertex
%%   of $T_1$. Similarly, $Y$ must be a vertex of $T_2$ which is a
%%   contradiction. \end{proof}

For an easier reading of the following proofs, let us pick an
arbitrary root $X_r$ for $T$. For any bag $X$ in $V(T)$, we define the
subtree $T(X)$ as the subtree of $T$ induced by $X$ and all its descendants.

\begin{claim}
  \label{prop:cutsmall}
Let $X$ be a bag in $V(T)$ such that for every bag $Y$ in $T(X)$, the
set $Y \cap S$ is included in $X$. Let $A$ be the set defined as
  \begin{equation*}
    A = \bigcup_{Y \in V(T(X))} Y.
  \end{equation*}
  Then,
  \begin{equation*}
    \lvert A \rvert \leq (d+1)(2 \ell +1)^w.
  \end{equation*}
\end{claim}
\claimproof
For any vertex $x$ in $A$, every path from $x$ to an element of $S$
has to go through $X$ (this is implied by
Claim~\ref{prop:cutset}). Therefore, the distances from $x$  to the vertices of $S$ are completely determined by the distances
from $x$ to the vertices of $X$. Since $S$ is a resolving set, the
vertices in $A$ must all have a different distance vector to $X$. By
taking a specific vertex of $X$ as a pin point, the distance from
$x$ to this pin is at most $d$, and all other distances can only
differ from this distance by at most $\ell$. There are at most $w$
other vertices in $X$. Thus, the number of possible vectors is
smaller than or equal to
\begin{equation*}
(d+1)(2\ell+1)^w,
\end{equation*}
concluding the proof of Claim~\ref{prop:cutsmall}.~\smallqed

\medskip
For each vertex $v$, we call the bag in $T$ that contains $v$ and is at minimum distance from the root $X_r$ of $T$ the \emph{oldest bag in $T$ containing $v$}. We note that such a bag is uniquely defined because of the subtree structure and the properties of a tree decomposition $T$. For every vertex $s$ in the resolving set $S$, we denote the oldest bag in $T$ containing $s$ by $X_s$, and we call it the \emph{ancestor} of $s$ in $T$.

Let $T_S$ be the subtree of $T$ obtained by only considering the
ancestors of all $s$ in $S$ and the paths from them to the root
$X_r$. Any leaf of $T_S$ is the ancestor of some $s$ in $S$. As a
direct consequence, $T_S$ has at most $k$ leaves.  A \emph{thread}
   in a graph $G$ is a path all whose inner-vertices have degree~$2$
   in $G$.

\begin{claim}\label{prop:length}
Let $P$ be a thread of length $L$ in $T_S$. Let $X_0$ and $X_l$ be
the bags at both ends of $P$. Suppose that for every inner vertex $X$ of
$P$, the set $X \cap S$ is included in $X_0 \cup X_L$. Then,
\begin{equation*}
L \leq (\ell + 1)(2 \ell +1)^{2w+1}\left[d_G(X_0,X_l) + 1\right].
\end{equation*}
\end{claim}
\claimproof Let $\lambda$ be the distance in $G$ between the sets $X_0$ and
$X_L$. Let $x_0x_1\cdots x_{\lambda}$ be a shortest path in $G$
between $X_0$ and $X_L$ ($x_0$ is in $X_0$ and $x_{\lambda}$ is in
$X_L$). Note that every edge along the path $x_0x_1\cdots
x_{\lambda}$ must be in one of the bags along $P$.

If $x$ is in $X_i$ and $X_{i+t}$ for some $i,j$ along the thread, then it is in all
the bags in between $X_i$ and $X_{i+t}$ (by the connectivity condition). Notice that every path
between a vertex in $\bigcup_{z=i}^{i+t}X_z$ and a vertex in $S$ has
to go through $X_i$ or $X_{i+t}$. This means that all vertices in
$\bigcup_{z=i}^{i+t}X_z$ must have different distance vectors to
$X_i \cup X_{i+t}$.

These distances are bounded above by $2\ell$ since $x$ is in all the
bags along this thread. The distance to $x$ is at most $\ell$. There
are at most $2w$ vertices different from $x$ in $X_i \cup X_{i+t}$. We
may conclude that,
\begin{equation*}
  \left\lvert \bigcup_{z=i}^{i+t}X_z \right\rvert \leq (\ell + 1)(2\ell+1)^{2w}.
\end{equation*}

Since the tree decomposition is reduced, every bag $X_i$ must
contain a vertex which is not in any $X_j$ for $j$ between 0 and
$i-1$. We derive that the number of bags is smaller than the
number of vertices
\begin{equation}
    k+1 \leq (\ell + 1)(2\ell+1)^{2w}.
\label{eqn:bndk}
\end{equation}
  In other words, vertices cannot be in too many bags along the thread.

  Now, we shall prove that $L$ cannot be too big with respect to
  $\lambda$. For this, let us denote by $i_q$ the largest index of a
  bag containing $x_q$ for $q$ between 0 and $\lambda$,
  \begin{equation*}
    i_q = \max \{i \,\colon x_q \in X_i\}.
  \end{equation*}
  With the help of \eqref{eqn:bndk}, we may say that,
  \begin{equation*}
    i_0 \leq (\ell + 1)(2\ell+1)^{2w}.
  \end{equation*}
  Since $x_qx_{q+1}$ is an edge, vertex $x_{q+1}$ has to appear in a bag before index $i_q$. By using
  \eqref{eqn:bndk} successively, we obtain
  \begin{equation*}
    i_q \leq (q+1)(\ell + 1)(2\ell+1)^{2w}.
  \end{equation*}

  Substituting $q$ with $\lambda$ in the previous
  equation and noting that $i_{\lambda} = L$, we obtain that
   \begin{equation*}
    L \leq (\lambda+1)(\ell + 1)(2\ell+1)^{2w}.
   \end{equation*}
   Since $\lambda = \text{dist}_G(X_0,X_L)$, this concludes the
   proof of Claim~\ref{prop:length}.~\smallqed

\medskip

Let us now focus on $T_S$. Recall that its leaves are a subset of the
ancestors of vertices of $S$.  Let $A$ be the set of ancestors and $I$
be the set of inner vertices of degree at least $3$ in $T_S$ (note
that $I$ has cardinality at most $k-1$ since $T_S$ has at most $k$
leaves). We decompose $T_S$ into (not necesseraly disjoint) threads as
follows. From any vertex $X$ in $A \cup I$, consider the thread to the
closest vertex that is either in $I$ or in $A$ on the unique path from
$X$ to the root $X_r$. Each of these threads satisfies the conditions
of Claim~\ref{prop:length} and thus has size bounded above by
$(d+1)(\ell + 1)(2\ell+1)^{2w}$. Moreover we have at most $|A|+|I|$
such threads, and $|A|+|I|$ is at most $2k-1$. We can then conclude
that
\begin{equation*}
  \left\lvert V(T_S) \right\rvert \leq (2k-1)(d+1)(\ell + 1)(2\ell + 1)^{2w}.
\end{equation*}

For each bag $X$ in $T_S$, we may have removed from $T$ a part of the
subtree $T(X)$ verifying the hypothesis of Claim~\ref{prop:cutsmall}.
%Moreover, each remaining bag has size at most $w+1$.
In the end, the union of all the bags cannot be too large. We
then obtain the following upper bound on the order of $G$.
\begin{align*}
  \left\lvert V(G) \right\rvert & \leq  \left\lvert V(T_S) \right\rvert (d+1)(2 \ell +1)^w \\
  & \leq  (2k-1)(d+1)(\ell + 1)(2\ell + 1)^{2w}(d+1)(2 \ell +1)^w\\
  & = \mathcal{O}(kd^2(2\ell+1)^{3w+1}).
\end{align*}

This concludes the proof of Theorem~\ref{thm:main}.
\end{proof}

We immediately obtain some corollaries of Theorem~\ref{thm:main}. The first one is due to the fact that the treelength is trivially upper-bounded by the diameter; for graphs with constant treewidth, it implies the upper bound $n=\mathcal{O}(kd^{\mathcal{O}(1)})$.

\begin{corollary}\label{coro:bounded-treewidth}
Let $G$ be a graph of treewidth at most~$w$, diameter~$d$ and a resolving set of size~$k$. Then $$n=\mathcal{O}(k2^{3w}d^{3w+3}).$$ In particular, if $G$ is $K_4$-minor-free, then $$n=\mathcal{O}(kd^{9}).$$
\end{corollary}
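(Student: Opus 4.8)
The plan is to derive Corollary~\ref{coro:bounded-treewidth} as an essentially immediate consequence of Theorem~\ref{thm:main}, the only work being to substitute appropriate bounds for the length and width parameters. First I would recall that any graph of treewidth at most $w$ admits a tree decomposition of width at most $w$, and that by deleting non-maximal bags we may assume this decomposition is reduced without increasing the width (as noted just before Theorem~\ref{thm:main}). For the length of this decomposition, I would use the trivial observation that the diameter of any bag is at most the diameter $d$ of the whole graph, so the decomposition has length $\ell \leq d$. Plugging $\ell = d$ and width $w$ into the bound of Theorem~\ref{thm:main} gives
\[
n = \mathcal{O}\bigl(kd^2(2d+1)^{3w+1}\bigr) = \mathcal{O}\bigl(k\,2^{3w}\,d^{2}\,d^{3w+1}\bigr) = \mathcal{O}\bigl(k\,2^{3w}\,d^{3w+3}\bigr),
\]
which is the first claimed bound; here I am absorbing the constant factor from $(2d+1)^{3w+1} \leq (3d)^{3w+1}$ for $d \geq 1$ into the $\mathcal{O}$-notation, treating $w$ as a constant so that $3^{3w+1}$ is a constant.

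For the second statement, I would invoke the classical fact (due to the forbidden-minor characterisation recalled in the introduction) that the $K_4$-minor-free graphs are exactly the graphs of treewidth at most $2$, i.e.\ the series-parallel graphs. Setting $w = 2$ in the bound just obtained yields $n = \mathcal{O}(k\,2^{6}\,d^{9}) = \mathcal{O}(kd^{9})$, since $2^6$ is an absolute constant. This completes the proof.

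There is essentially no obstacle here: the corollary is purely a matter of specialising Theorem~\ref{thm:main} and citing the standard equivalence between bounded treewidth and excluded-clique-minors in the $w = 2$ case. The only point requiring a word of care is making explicit that we are free to choose the tree decomposition realising the treewidth and to reduce it, and that the length of \emph{that} decomposition is bounded by $d$ (rather than by the treelength, which could be smaller but which we do not need to control here). One could alternatively phrase the first bound using the treelength $\ell$ in place of $d$ to get the sharper $n = \mathcal{O}(k d^2 (2\ell+1)^{3w+1})$, but since $\ell \leq d$ always, the stated form is what follows directly and suffices for the $K_4$-minor-free consequence.
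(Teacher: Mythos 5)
Your proposal is correct and follows exactly the paper's route: the authors derive this corollary from Theorem~\ref{thm:main} by noting that the length of any (reduced) width-$w$ tree decomposition is trivially at most the diameter $d$, and then specialise to $w=2$ for $K_4$-minor-free graphs. The only cosmetic difference is your use of $(2d+1)^{3w+1}\leq(3d)^{3w+1}$, which is harmless for fixed $w$ (and a slightly sharper estimate recovers the $2^{3w}$ constant as stated).
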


We have another corollary for chordal graphs, based on the following observation and on the fact that chordal graphs have treelength~$1$~\cite{DG05}.

\begin{observation}
If $G$ is a chordal graph of treewidth~$w$ and with a resolving set of size~$k$, then $w\leq 3^k$.
\end{observation}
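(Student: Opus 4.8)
The plan is to reduce the statement to a bound on the clique number of $G$. Since $G$ is chordal, it has a tree decomposition (a clique tree) in which every bag is a clique; hence every bag has at most $\omega$ vertices, where $\omega$ denotes the number of vertices in a largest clique of $G$, and this decomposition witnesses $w\leq \omega-1$. Conversely, $w\geq \omega-1$ holds for every graph: by the Helly property of subtrees of a tree, any tree decomposition has a bag containing all the vertices of a fixed clique. Thus $w=\omega-1$, and it suffices to prove that $\omega\leq 2^k$, which is stronger than the claimed inequality $w\leq 3^k$.

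To bound $\omega$, let $K$ be a largest clique of $G$ and let $S=\{s_1,\ldots,s_k\}$ be a resolving set of $G$. The key observation is that for any fixed index $i$ and any two vertices $u,v\in K$, we have $|d_G(s_i,u)-d_G(s_i,v)|\leq d_G(u,v)\leq 1$, since $u$ and $v$ are equal or adjacent. Hence the set $\{d_G(s_i,u) : u\in K\}$ consists of at most two consecutive integers, namely it is contained in $\{m_i,m_i+1\}$ with $m_i=\min_{u\in K}d_G(s_i,u)$. Therefore the distance vector $(d_G(s_1,u),\ldots,d_G(s_k,u))$ of a vertex $u$ of $K$ has each of its $k$ coordinates taking one of at most two values, so at most $2^k$ distinct such vectors arise as $u$ ranges over $K$. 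Since $S$ is a resolving set, these vectors are pairwise distinct, and we conclude that $|K|\leq 2^k$.

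Combining the two parts gives $w=\omega-1\leq 2^k-1\leq 3^k$, as required. The only step appealing to an outside fact is the identity between the treewidth of a chordal graph and its clique number minus one; everything else is immediate from the triangle inequality and the definition of a resolving set, so there is no real obstacle here — indeed the argument even yields the sharper bound $\omega\leq 2^k$.
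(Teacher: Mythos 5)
Your proof is correct, and it is a mild sharpening of the paper's argument rather than a different one: the underlying idea in both cases is to count the possible distance vectors to the resolving set on a set of pairwise-close vertices, and both rely on the same external fact that a chordal graph of treewidth $w$ contains a clique on $w+1$ vertices. The paper applies the counting to a closed neighbourhood $N[v]$, whose vertices are pairwise at distance at most~$2$, so each coordinate of the distance vector takes at most three values; this gives $\Delta(G)\leq 3^k$ and then $w\leq\Delta(G)$. You apply the counting directly to a maximum clique, whose vertices are pairwise at distance at most~$1$, so each coordinate takes at most two consecutive values, giving $\omega\leq 2^k$ and hence the stronger conclusion $w=\omega-1\leq 2^k-1$. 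Your intermediate steps are all sound (the Helly-property argument for $w\geq\omega-1$, the triangle-inequality bound, and the injectivity of distance vectors on $K$, which holds even if some $s_i$ lies in $K$), so what your route buys is a better constant at essentially no extra cost; what the paper's route buys is nothing additional here, except that it records the independently usable bound $\Delta(G)\leq 3^k$.
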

\begin{proof}
Let $v$ be a vertex of $G$, and let $x\in N[v]$. For any vertex $s$ in a resolving set of size~$k$ of $G$, there are at most three possible distance values for the distance $d(x,s)$, since any two vertices in $N[v]$ are at distance at most~$2$. Thus, there can be at most $3^k$ vertices in $N[v]$, which proves that $\Delta(G)\leq 3^k$. Now, a chordal graph of treewidth~$w$ must have a clique of size~$w+1$ (indeed, it is well-known that in any optimal treedecomposition of a chordal graph, each bag forms a clique), thus we have $w\leq \Delta(G)$.
\end{proof}

\begin{corollary}\label{coro:chordal}
If $G$ is a chordal graph with diameter~$d$, treewidth $w$ and a resolving set of size~$k$, then $$n=\mathcal{O}(kd^{2}3^{3w+1}).$$ Moreover, $$n=\mathcal{O}(d^22^{2^{\mathcal{O}(k)}}).$$
\end{corollary}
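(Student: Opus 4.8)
The plan is to combine Theorem~\ref{thm:main} with the two facts stated just above Corollary~\ref{coro:chordal}: chordal graphs have treelength $\ell=1$~\cite{DG05}, and if such a graph has a resolving set of size $k$ then its treewidth satisfies $w\leq 3^k$. Plugging $\ell=1$ into the bound of Theorem~\ref{thm:main} gives $n=\mathcal{O}(kd^2 3^{3w+1})$, which is exactly the first displayed inequality of the corollary once we substitute the value of $w$; since $w$ is already a parameter of the statement, no substitution is even needed for that line, and the first bound is immediate.

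For the second bound, the idea is to eliminate the dependence on $w$ entirely by using $w\leq 3^k$. Substituting this into $n=\mathcal{O}(kd^2 3^{3w+1})$ yields
\[
  n=\mathcal{O}\!\left(kd^2\, 3^{\,3\cdot 3^k+1}\right).
\]
The remaining task is purely a matter of simplifying the tower of exponents into the claimed form $\mathcal{O}\!\left(d^2\, 2^{2^{\mathcal{O}(k)}}\right)$. First rewrite $3^{3\cdot 3^k+1}=3\cdot (3^3)^{3^k}=3\cdot 27^{3^k}$, and then bound $27^{3^k}\leq 2^{c\cdot 3^k}$ for a suitable constant $c$ (since $27=2^{\log_2 27}$), so that $27^{3^k}\leq 2^{(\log_2 27)\,3^k}$. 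Finally, $3^k=2^{k\log_2 3}=2^{\mathcal{O}(k)}$, hence $(\log_2 27)\,3^k=2^{\mathcal{O}(k)}$ as well, giving $27^{3^k}\leq 2^{2^{\mathcal{O}(k)}}$. The factor $kd^2\cdot 3$ out front is absorbed: $k\leq 2^{2^{\mathcal{O}(k)}}$ trivially, so the whole expression is $\mathcal{O}\!\left(d^2\, 2^{2^{\mathcal{O}(k)}}\right)$.

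There is essentially no obstacle here — the corollary is a direct specialization of Theorem~\ref{thm:main} followed by routine manipulation of iterated exponentials — but one should be a little careful about one point: Theorem~\ref{thm:main} is stated for a \emph{reduced} tree decomposition of given width and length, so I would note explicitly that any chordal graph admits a reduced tree decomposition of length $1$ whose width equals its treewidth $w$ (this follows from the clique-tree characterization of chordal graphs together with the remark in the excerpt that reduction preserves both width and length). With that observation in place, applying Theorem~\ref{thm:main} with $\ell=1$ is legitimate and both displayed bounds follow. I would present the argument in two short sentences: one invoking Theorem~\ref{thm:main} with $\ell=1$ for the first bound, and one applying the Observation ($w\leq 3^k$) and simplifying for the second.
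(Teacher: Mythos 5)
Your proposal is correct and follows exactly the route the paper intends: Corollary~\ref{coro:chordal} is stated as an immediate consequence of Theorem~\ref{thm:main} with $\ell=1$ (chordal graphs have treelength~$1$, via the clique tree) combined with the Observation that $w\leq 3^k$, and your simplification of $3^{3\cdot 3^k+1}$ into the form $2^{2^{\mathcal{O}(k)}}$ is the routine step the authors leave implicit. Your extra remark about using a reduced clique-tree decomposition is a reasonable bit of care but changes nothing of substance.
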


We do not know whether the bounds presented in this section are tight. We note that for interval graphs, which are chordal, it is known that a bound of the form $n=\mathcal{O}(dk^2)$ holds, and there are interval graphs for which $n=\Theta(dk^2)$~\cite{part1}. By Theorem~\ref{thm:trees}, there are trees that satisfy $n=\Theta(d^2k)$.

\section{Graphs of bounded distance-VC dimension}\label{sec:VC}

Let $\cH=(V,\mathcal E)$ be a hypergraph. A {\em test cover} of $\cH$ is a set of edges $\cC$ such that each vertex is covered by some edge of $\cC$ and for any pair $x$,$y$ of vertices there is an edge of $\cC$ containing exactly one vertex among $\{x,y\}$.
We denote by $\TC(\cH)$ the minimum size of a test cover of $\cH$. A hypergraph is \emph{twin-free} if for any two distinct vertices, there is at least one hyperedge containing exactly one of them. One can easily check that a hypergraph admits a test cover if and only if it is twin-free.
The \emph{projection} of $\cH$ on a set $X$ of vertices, is defined as %$\restriction{\cH}{X}
$\cH_{|X}:=\{e\cap X : e \in \cE\}$. A set of vertices $X$ is {\em shattered} in $\cH$ if $|\cH_{|X}|=2^{|X|}$.
% is a set of vertices $X$ such that for any subset $X'\subseteq X$, there is an edge $e$ of $\cH$ such that $e\cap X = X'$.
The maximum size of a shattered set in $\cH$ is the {\em VC dimension} of $\cH$, denoted by $\VC(\cH)$.

A {\em $2$-shattered set} in a hypergraph $\cH$ is a set $X$ such that for all $X'\subset X$ of size~$2$, there is a hyperedge $e$ such that $e\cap X=X'$. The {\em $2$-VC dimension} of $\cH$ is the maximum size of a $2$-shattered set in $\cH$. Clealry, the $2$-VC dimension of $\cH$ is at least as large as its VC dimension.

The {\em dual hypergraph} of a hypergraph $\cH$ is denoted $\cH^*$: it is the hypergrah whose vertices are the hyperedges of $\cH$, and vice-versa, and where the incidence relation is the same as in $\cH$. The {\em dual VC dimension} of $\cH$ is the VC dimension of the dual and is denoted by $\VC^*(\cH)$. We always have the following inequalities~\cite{A83}.
$$\log(\VC^*(\cH)) \leq \VC(\cH) \leq 2^{\VC^*(\cH)}.$$

The following standard lemma is crucial in the study of the VC dimension.

\begin{lemma}[Sauer-Shelah Lemma \cite{S72,Sh72}]\label{lem:Sauer}
If $\cH=(V,\cE)$  is a hypergraph and $X$, a subset of vertices, then $|\cH_{|X}|\leq |X|^{\VC(\cH)}+1$.
\end{lemma}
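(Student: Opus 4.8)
The plan is to establish the sharper, standard form of the lemma, namely
\[ |\cH_{|X}| \le \sum_{i=0}^{\delta}\binom{|X|}{i}, \qquad \delta := \VC(\cH), \]
and then read off the stated bound from the elementary estimate $\sum_{i=0}^{\delta}\binom{m}{i}\le m^{\delta}+1$, valid for all integers $m,\delta\ge 1$. The latter I would justify by an injection: a nonempty subset $\{a_1<a_2<\cdots<a_j\}$ of $\{1,\dots,m\}$ with $j\le\delta$ is sent to the $\delta$-tuple $(a_1,\dots,a_j,a_j,\dots,a_j)$ obtained by repeating the largest element until length $\delta$ is reached; distinct subsets of size at most $\delta$ have distinct images, since the set of coordinates of the image recovers the subset. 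Hence there are at most $m^{\delta}$ nonempty subsets of size at most $\delta$, and adding the empty set gives $m^{\delta}+1$.

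For the sharp form, work with the trace $\mathcal{F}:=\cH_{|X}$, viewed as a set system on the ground set $X$. The key observation is that any $T\subseteq X$ shattered by $\mathcal{F}$ is also shattered by $\cH$: since $T\subseteq X$ we have $(e\cap X)\cap T=e\cap T$ for every $e\in\cE$, so $\mathcal{F}_{|T}=\cH_{|T}$, and in particular every $T$ shattered by $\mathcal{F}$ satisfies $|T|\le\delta$. It therefore suffices to prove Pajor's lemma: every finite set system $\mathcal{F}$ on a finite ground set shatters at least $|\mathcal{F}|$ subsets of that ground set. Granting this, the subsets of $X$ shattered by $\cH_{|X}$ number at least $|\cH_{|X}|$, each has size at most $\delta$, and there are only $\sum_{i=0}^{\delta}\binom{|X|}{i}$ subsets of $X$ of size at most $\delta$, which yields the sharp form.

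Pajor's lemma I would prove by induction on $|X|$, the case $X=\emptyset$ being trivial. For the inductive step fix $x\in X$, set $X'=X\setminus\{x\}$, and split $\mathcal{F}$ into $\mathcal{F}_0=\{A\in\mathcal{F}:x\notin A\}$ and $\mathcal{F}_1=\{A\setminus\{x\}:A\in\mathcal{F},\ x\in A\}$, both set systems on $X'$ satisfying $|\mathcal{F}_0|+|\mathcal{F}_1|=|\mathcal{F}|$ and $\mathcal{F}_0\cup\mathcal{F}_1=\mathcal{F}_{|X'}$; put $\mathcal{G}=\mathcal{F}_0\cap\mathcal{F}_1$. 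By induction, $\mathcal{F}_{|X'}$ shatters at least $|\mathcal{F}_0\cup\mathcal{F}_1|$ subsets of $X'$, each of which is also shattered by $\mathcal{F}$; and $\mathcal{G}$ shatters at least $|\mathcal{F}_0\cap\mathcal{F}_1|$ subsets $T$ of $X'$, and for each such $T$ the set $T\cup\{x\}$ is shattered by $\mathcal{F}$ (any $S\subseteq T$ is cut out of $T\cup\{x\}$ by a member of $\mathcal{G}\subseteq\mathcal{F}_0$ avoiding $x$, while $S\cup\{x\}$ is cut out by the corresponding member of $\mathcal{F}$ containing $x$). These two families of shattered sets are disjoint, the second consisting entirely of sets containing $x$, so $\mathcal{F}$ shatters at least $|\mathcal{F}_0\cup\mathcal{F}_1|+|\mathcal{F}_0\cap\mathcal{F}_1|=|\mathcal{F}_0|+|\mathcal{F}_1|=|\mathcal{F}|$ sets. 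The argument is entirely standard and there is no genuine obstacle; the only points needing a little care are this last bit of bookkeeping (disjointness of the two families, and the verification that $T\cup\{x\}$ is really shattered) together with the closing numerical estimate $\sum_{i=0}^{\delta}\binom{m}{i}\le m^{\delta}+1$.
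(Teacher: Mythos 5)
Your argument is correct, but note that the paper itself offers no proof of this lemma: it is quoted verbatim as a known result of Sauer and Shelah, so there is nothing internal to compare against. What you supply is a complete, self-contained derivation, and a good one: you first prove the sharp binomial form $|\cH_{|X}|\le\sum_{i=0}^{\delta}\binom{|X|}{i}$ via Pajor's lemma (the induction splitting $\mathcal{F}$ into $\mathcal{F}_0$ and $\mathcal{F}_1$ over a distinguished element $x$, with the counting $|\mathcal{F}_0\cup\mathcal{F}_1|+|\mathcal{F}_0\cap\mathcal{F}_1|=|\mathcal{F}|$ and the verification that $T\cup\{x\}$ is shattered whenever $\mathcal{F}_0\cap\mathcal{F}_1$ shatters $T$), having correctly observed that any $T\subseteq X$ shattered by the trace $\cH_{|X}$ is shattered by $\cH$ itself, so all shattered sets have size at most $\delta=\VC(\cH)$. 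The closing estimate $\sum_{i=0}^{\delta}\binom{m}{i}\le m^{\delta}+1$ via the padded-tuple injection is also fine and is exactly what converts the standard sharp bound into the polynomial form the paper actually states and uses. The only cosmetic caveat is the degenerate case $\delta=0$, where your injection argument does not literally apply but the inequality $1\le m^{0}+1$ holds trivially; it costs one sentence to dispose of. In short: a correct proof of a statement the paper only cites.
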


\subsection{A dichotomy theorem for test covers and VC dimension}

If $G$ is a graph, one can define the closed neighbourhood hypergraph ${\cH}_1(G)$ of $G$ that has vertex set $V(G)$ and edge set the set of closed neighbourhoods of vertices of $G$. An {\em identifying code} of $G$ is a test cover of $\cH_1(G)$, and the VC dimension of $G$ is often defined as the VC dimension of $\cH_1(G)$. A graph $G$ is \emph{twin-free} if $\cH_1(G)$ is twin-free. In \cite{BLLPT15}, the VC dimension and identifying codes are related by the following dichotomy result.

\begin{theorem}[\cite{BLLPT15}]\label{thm:dicho}
For every hereditary class of graphs $\cC$, either
\begin{enumerate}
\item for every $k\in \mathbb N$, there exists a graph $G_k\in \mathcal{C}$ with more than $2^k-1$ vertices and an identifying code of size $2k$, or
\item there exists $\varepsilon > 0$ such that no twin-free graph $G\in \cC$ with $n$ vertices has an identifying code of size smaller than $n^\varepsilon$.
\end{enumerate}
\end{theorem}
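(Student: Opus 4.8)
The plan is to split on whether the VC dimension of the closed neighbourhood hypergraphs stays bounded over $\cC$. Set $c^\ast := \sup_{G\in\cC}\VC(\cH_1(G))\in\mathbb{N}\cup\{\infty\}$. I claim the first alternative holds when $c^\ast=\infty$ and the second when $c^\ast<\infty$. Two facts are used throughout: since $\cC$ is hereditary, every induced subgraph we extract is again in $\cC$; and since restricting a hypergraph to a subset of its vertices cannot increase its VC dimension, every member of $\cC$ (and every induced subgraph of one) has $\VC(\cH_1(\cdot))\le c^\ast$.

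Suppose first $c^\ast<\infty$, and let $G\in\cC$ be twin-free on $n$ vertices. Being twin-free, $\cH_1(G)$ admits a test cover, i.e.\ $G$ has an identifying code; let $C$ be a minimum one. By the definition of an identifying code the map $v\mapsto N[v]\cap C$ is injective on $V(G)$, so $n\le |\cH_1(G)_{|C}|$. Applying the Sauer--Shelah Lemma (Lemma~\ref{lem:Sauer}) to the hypergraph $\cH_1(G)$ and the vertex subset $C$ gives $|\cH_1(G)_{|C}|\le |C|^{\VC(\cH_1(G))}+1\le |C|^{c^\ast}+1$. Hence $|C|\ge (n-1)^{1/c^\ast}$; after disposing of the finitely many small values of $n$ directly (a twin-free graph on $n\ge 2$ vertices has no identifying code of size $1$), this yields $|C|\ge n^{\varepsilon}$ for a suitable $\varepsilon=\varepsilon(c^\ast)>0$, which is the second alternative. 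This direction is essentially one invocation of Sauer--Shelah.

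Now suppose $c^\ast=\infty$. Fix $k$; then some $G\in\cC$ shatters a set $X=\{v_1,\dots,v_k\}$ in $\cH_1(G)$. For each $S\subseteq X$ choose a vertex $w_S$ with $N_G[w_S]\cap X=S$; the $w_S$ are pairwise distinct since they have pairwise distinct traces on $X$. Let $G_k$ be the subgraph of $G$ induced by $X\cup\{w_S:S\subseteq X\}$. Then $G_k\in\cC$ and $|V(G_k)|\ge 2^k>2^k-1$, and the $X$-traces of the $w_S$ are preserved in $G_k$ and pairwise distinct, so $X$ already separates any two vertices of $G_k$ with distinct $X$-traces; in particular it separates all $2^k$ realizers from one another. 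What remains is to separate the at most $k$ vertices of $X$ from one another and from their ``trace twins'' among the $w_S$. For this one enlarges the code by $O(k)$ further vertices (the natural candidates are the singleton realizers $w_{\{v_i\}}$ together with $w_\emptyset$) and passes to the twin-free reduction of $G_k$, which is again an induced subgraph of $G$, hence in $\cC$, and still contains all $2^k$ realizers because two distinct realizers are never twins. Verifying that the resulting code, of size at most $2k$, is an identifying code of this twin-free graph gives the first alternative.

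The main obstacle is precisely that last verification. A shattered set is not, on its own, an identifying code of the graph induced on it together with its realizers: elements of the shattered set may fail to be resolved by it, and may even be genuine twins of realizers, while the adjacencies \emph{among} the realizers are not dictated by the shattering. The slack built into the statement — $2^k-1$ rather than $2^k$ vertices, and a code of size $2k$ rather than $k$ — is exactly the budget needed to discard a bounded number of ``bad'' vertices and to anchor the survivors with $O(k)$ extra code vertices while keeping the vertex count exponential. I expect all the genuine difficulty to sit in this bookkeeping (which realizers to keep, and the twin analysis); the complementary bounded-VC-dimension direction is immediate from Sauer--Shelah as above.
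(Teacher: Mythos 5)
First, a point of order: the paper does not prove Theorem~\ref{thm:dicho} at all --- it is quoted from~\cite{BLLPT15} as a known result --- so there is no in-paper proof to compare yours against. The closest material is Proposition~\ref{prop:TCvsVC*} (the Sauer--Shelah direction, stated for general hypergraphs) and the converse proposition that follows it (for hypergraph classes closed under projections). Your dichotomy on $c^\ast=\sup_{G\in\cC}\VC(\cH_1(G))$ is the right frame, and your bounded case is correct and complete: it is Proposition~\ref{prop:TCvsVC*} specialised to $\cH_1(G)$, and the patch for small $n$ is routine (any identifying code $C$ of a twin-free graph satisfies $n\le 2^{|C|}-1$, so $|C|\ge \log_2(n+1)$).

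The unbounded case, however, is a plan rather than a proof, and the step you defer is exactly the step that does not follow from anything you have set up. Concretely: (i) your candidate extra code vertices $w_{\{v_i\}}$ and $w_\emptyset$ do separate the $v_i$ from one another and dominate $w_\emptyset$, but they do not separate $v_i$ from the realizer $w_{T_i}$ with $T_i=N[v_i]\cap X$; whether $w_{\{v_i\}}$ is adjacent to $w_{T_i}$ is precisely one of the uncontrolled realizer--realizer adjacencies you flag yourself, and a separator for this pair need not exist inside $V(G_k)$ at all (it exists in $G$, but importing it into $G_k$ creates a new vertex that must in turn be dominated and separated). (ii) ``Passing to the twin-free reduction'' is not one deletion round: removing a vertex can create new twins, and removing $v_i$ (say, as a twin of $w_{T_i}$) threatens to collapse the $2^{k-1}$ pairs $w_S, w_{S\triangle\{v_i\}}$ that only $v_i$ separated, which iterated could destroy the exponential vertex count. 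This is repairable --- if $N[v_i]=N[w_{T_i}]$ then $w_{T_i}$ inherits all of $v_i$'s separating power and can replace it in the code --- but that substitution argument, together with the case where $v_i$ and $w_{T_i}$ are \emph{not} twins, is the actual content of the hard direction in~\cite{BLLPT15} and is absent here. The contrast with the paper's hypergraph converse is instructive: there the construction is clean because a projection, unlike an induced subgraph, carries no uncontrolled adjacencies among the realizers; the slack in the graph statement (a code of size $2k$ for more than $2^k-1$ vertices) exists to absorb exactly the bookkeeping you have postponed, so it cannot be waved away as bookkeeping.
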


We show next that Theorem~\ref{thm:dicho} can be extended to test covers.

\begin{proposition}\label{prop:TCvsVC*}
If $\cH$ is a twin-free hypergraph, then $$|V|\leq (\TC(\cH))^{\VC^*(\cH)}+1.$$
\end{proposition}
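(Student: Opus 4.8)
The plan is to exploit the duality between $\cH$ and $\cH^*$ together with the Sauer–Shelah Lemma (Lemma~\ref{lem:Sauer}) applied in the dual hypergraph. Let $\cC$ be a minimum test cover of $\cH$, so $|\cC|=\TC(\cH)$, and view $\cC$ as a set of vertices of the dual hypergraph $\cH^*$ (recall the vertices of $\cH^*$ are the hyperedges of $\cH$, and the hyperedges of $\cH^*$ are, for each $x\in V$, the set $e_x=\{e\in\cE : x\in e\}$ of hyperedges containing $x$). The key observation is that the test-cover condition translates exactly into the statement that the sets $e_x\cap\cC$, for $x\in V$, are pairwise distinct and nonempty: if $x\neq y$ then some $e\in\cC$ separates them, i.e.\ lies in exactly one of $e_x,e_y$, so $e_x\cap\cC\neq e_y\cap\cC$; and covering says $e_x\cap\cC\neq\emptyset$. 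Hence the map $x\mapsto e_x\cap\cC$ is an injection from $V$ into $(\cH^*)_{|\cC}\setminus\{\emptyset\}$, giving $|V|\leq |(\cH^*)_{|\cC}|-1$ (the empty set is always a trace, since some hyperedge of $\cH^*$ — in fact all of them if $\cH$ has no isolated structure — need not meet $\cC$; more carefully, $\emptyset$ is in the projection unless every $x$ has $e_x\cap\cC\neq\emptyset$, but in any case we only need $|V|\le |(\cH^*)_{|\cC}|$, and the $-1$ can be recovered by noting $\emptyset\notin\{e_x\cap\cC\}$ while the projection has size at most $|\cC|^{\VC^*}+1$).

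Next I would apply the Sauer–Shelah Lemma to the dual hypergraph $\cH^*$ with the vertex subset $\cC$: it gives $|(\cH^*)_{|\cC}|\leq |\cC|^{\VC(\cH^*)}+1$. Since by definition $\VC(\cH^*)=\VC^*(\cH)$ and $|\cC|=\TC(\cH)$, this reads $|(\cH^*)_{|\cC}|\leq (\TC(\cH))^{\VC^*(\cH)}+1$. Combining with the injection from the previous paragraph — which shows $|V|$ is at most the number of nonempty traces, hence at most $|(\cH^*)_{|\cC}|-1$ — we conclude $|V|\leq (\TC(\cH))^{\VC^*(\cH)}+1-1+1 = (\TC(\cH))^{\VC^*(\cH)}+1$; the bookkeeping on the additive constant needs a little care but comes out as claimed because the all-zero trace is not hit by any $e_x\cap\cC$ when the cover condition holds, while it does contribute to the Sauer–Shelah count.

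The step I expect to require the most care is precisely this additive-constant bookkeeping: making sure that the $+1$ in Sauer–Shelah is exactly absorbed by the fact that $\emptyset$ is not among the values $e_x\cap\cC$. Concretely, I would argue: each vertex $x$ is covered by $\cC$, so $e_x\cap\cC\neq\emptyset$; distinct $x,y$ give distinct traces by the separation property; therefore $x\mapsto e_x\cap\cC$ injects $V$ into $(\cH^*)_{|\cC}\setminus\{\emptyset\}$, so $|V|\leq |(\cH^*)_{|\cC}\setminus\{\emptyset\}| \leq |(\cH^*)_{|\cC}| - [\emptyset\in (\cH^*)_{|\cC}]$. If $\emptyset\in(\cH^*)_{|\cC}$ we immediately get $|V|\leq |(\cH^*)_{|\cC}|-1\leq (\TC(\cH))^{\VC^*(\cH)}$, which is even stronger than claimed. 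If $\emptyset\notin(\cH^*)_{|\cC}$, then in fact Sauer–Shelah already gives $|(\cH^*)_{|\cC}|\le |\cC|^{\VC^*(\cH)}$ in the standard refined form where the "$+1$" corresponds to the empty trace; alternatively one simply uses the crude bound $|V|\le|(\cH^*)_{|\cC}|\le (\TC(\cH))^{\VC^*(\cH)}+1$. Either way the stated inequality $|V|\leq (\TC(\cH))^{\VC^*(\cH)}+1$ holds, and this is the whole proof.
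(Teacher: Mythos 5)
Your proposal is correct and follows essentially the same route as the paper: view the test cover $\cC$ as a vertex subset of the dual hypergraph, observe that the separation property forces the traces $e_x\cap\cC$ to be pairwise distinct so that $|V|\leq |\cH^*_{|\cC}|$, and then apply the Sauer--Shelah Lemma to $\cH^*$ restricted to $\cC$. Your extra bookkeeping about the empty trace is harmless (and shows the bound can sometimes be sharpened by one), but it is not needed; the crude bound you fall back on is exactly the paper's argument.
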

\begin{proof}
Let $\cH^{*}$ be the dual hypergraph of $\cH$. Let $\cC$ be a test cover of $\cH$
 of size $\TC(\cH)$. We have $|\cH^{*}_{|\cC}|=|V|$ since otherwise two vertices
 of $V$ would belong to the same set of edges of $\cC$. Then by Lemma~\ref{lem:Sauer}, we have $|\cH^{*}_{|\cC}| \leq |\cC|^{\VC(\cH^{*})}=\TC(\cH)^{\VC^{*}(\cH)}+1$. Therefore, $|V|\leq \TC(\cH)^{\VC^{*}(\cH)}+1$.
\end{proof}

We can also prove the converse.

\begin{proposition}
Let $\cC$ be a class of hypergraphs that is stable by taking projections. If $\cC$ has unbounded dual VC dimension, then for any integer $k$, there exists a hypergraph $\cH$ in $\cC$ with $2^k-1$ vertices and a test cover of size $k$.
\end{proposition}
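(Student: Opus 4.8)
The plan is to read off the required hypergraph directly from a witness of large dual VC dimension, and then shrink it to the right number of vertices using stability under projections. Fix $k$. Since $\cC$ has unbounded dual VC dimension, I would first choose $\cH=(V,\cE)\in\cC$ with $\VC^*(\cH)\ge k$; equivalently, $\cH^*$ shatters some set $X=\{e_1,\dots,e_k\}$ of $k$ hyperedges of $\cH$. Unravelling the definition of the dual hypergraph, the hyperedge of $\cH^*$ corresponding to a vertex $v\in V$ is $\{e\in\cE:v\in e\}$, so the statement ``$X$ is shattered in $\cH^*$'' says precisely: for every subset $Y\subseteq X$ there is a vertex $v\in V$ whose \emph{trace} on $X$, i.e.\ the set $\{e\in X:v\in e\}$, is exactly $Y$.

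Next, for each nonempty $Y\subseteq X$ I would pick one such vertex $v_Y$. Distinct subsets of $X$ are distinct traces, so the vertices $v_Y$ (over $\emptyset\neq Y\subseteq X$) are pairwise distinct; I would set $W=\{v_Y:\emptyset\neq Y\subseteq X\}$, so that $|W|=2^k-1$, and $\cH'=\cH_{|W}$. By stability of $\cC$ under projections, $\cH'\in\cC$, and it has $2^k-1$ vertices. It then remains to check that $\mathcal F:=\{e_1\cap W,\dots,e_k\cap W\}$ is a test cover of $\cH'$ of size $k$. For coverage, every vertex $v_Y\in W$ has $Y\neq\emptyset$, hence $v_Y\in e_i\cap W$ for any $e_i\in Y$. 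For separation, if $v_Y\neq v_{Y'}$ then $Y\neq Y'$, so some $e_i$ belongs to exactly one of $Y,Y'$, and then $e_i\cap W$ contains exactly one of $v_Y,v_{Y'}$. Finally the $k$ sets $e_i\cap W$ are pairwise distinct, since for $i\neq j$ the vertex $v_{\{e_i\}}$ lies in $e_i\cap W$ but not in $e_j\cap W$; hence $\mathcal F$ has size exactly $k$ (and in particular $\cH'$ is twin-free, as it must be to admit a test cover).

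There is no serious obstacle here: the argument is essentially a dualisation of the proof of Proposition~\ref{prop:TCvsVC*}. The only points that need care are the correct translation of ``shattered in the dual'' into a statement about traces of vertices on the chosen hyperedges, and the two distinctness checks — that the vertices $v_Y$ are distinct and that the restricted hyperedges $e_i\cap W$ are distinct — since these are exactly what fix the vertex count at $2^k-1$ and the test-cover size at $k$.
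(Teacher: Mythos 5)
Your proof is correct and follows essentially the same construction as the paper: choose $k$ hyperedges forming a shattered set in the dual, pick one vertex realizing each nonempty trace, and project onto these $2^k-1$ vertices so that the $k$ projected hyperedges form a test cover. The only difference is that you skip the paper's preliminary step of first passing to a hypergraph of dual VC dimension \emph{exactly} $k$ (which is not needed for the statement as written), and you spell out the coverage, separation, and distinctness checks more explicitly.
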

\begin{proof}
Notice first that for any $k$, $\cC$ contains a hypergraph with dual VC dimension exactly $k$. Indeed, assume  that $\cH$ is the hypergraph of $\cC$ with the smallest dual VC dimension $k'$ larger or equal to $k$. Then let $\mathcal{A}$ be a shattered set of hyperedges of size $k'$. Let $X$ be a set of vertices that shatters $\mathcal A$ (that is, for each subset of hyperedges of $\mathcal A$ there is a unique vertex of $X$ belonging to this subset). Remove one vertex $x$ of $X$ and let $\cH'=\cH_{|X-x}$. Then $\cH'$ belongs to $\mathcal{C}$ and $\VC^{*}(\cH')=k'-1$. Thus, by our assumptions, $k'=k$.

Now consider a hypergraph  of $\cC$ with dual  VC dimension $k$ and as before, let $\mathcal A$ be a shattered set of size $k$ and $X$ be a set of $2^k$ vertices such that there for each subset of hyperedges of $\mathcal A$ there is exactly one vertex of $X$ that is contained to exactly this subset of hyperedges. Let $x_0$ be the vertex of $X$ that is contained in no hyperedges and consider the hypergraph $\cH$ induced by $X\setminus \{x_0\}$. Notice first that $\cH$ belongs to $\cC$. By construction, $\cH$ has $2^{k}-1$ vertices and the set of hyperedges of $\mathcal A$ forms a test cover. Furthermore, any proper subset of hyperedges is not a test cover since the minimum size of a test cover among $2^k-1$ vertices is $k$.
\end{proof}

\subsection{Metric dimension, VC dimension and diameter}

In contrast to test covers, there is no direct relation between the VC dimension of $G$ and its metric dimension. Indeed, consider the family of line graphs. Any line graph has VC dimension at most~$4$.
Nevertheless, there is a line graph with more than $2^k$ vertices, diameter~$4$ and metric dimension at most~$k$. Indeed, consider the following graph. Take $k$ disjoint edges $\{e_1,...,e_k\}$ and $2^k-1$ disjoint edges $\{e'_I, I\subseteq \{1,...,k\}, I\neq \emptyset\}$ corresponding to the nonempty subsets of $\{e_1,...,e_k\}$.
For each edge $e'_I$, add $|I|$ edges between one endpoint of $e'_I$ (always the same one) and all the endpoints of $e_i$ for $i\in I$ (again, choose always the same endpoint for $e_i$).
Let $G$ be the line graph of this graph. The graph $G$ has $k+2^k-1+\sum_{i=1}^{k}i{k \choose i}$ vertices and diameter $4$. Moreover, the set $S$ of vertices corresponding to the edges $\{e_1,...,e_k\}$ forms a resolving set. Indeed, a vertex corresponding to an edge $e'_I$ has distance~$2$ to $e_i$ if $i\in I$ and $4$ otherwise. A vertex corresponding to an edge between $e'_I$ and $e_i$ (with $i\in I$) has distance~$1$ to $e_i$, 2 to $e_j$ when $j\in I$ and $4$ otherwise. Therefore all the edges have unique distance vector to $S$.

However, there is such a relation when we consider the {\em distance-VC dimension}, introduced by Bousquet and Thomass\'e~\cite{BT15}. The {\em distance hypergraph} of $G$ is the hypergraph $\cH(G)$ with vertex set $V$ and for all $\ell$, all the balls of radius $\ell$. The {\em distance-VC dimension} of $G$, $\dVC(G)$, is the VC dimension of $\cH(G)$. The {\em dual distance-VC dimension} of $G$, denoted $\dVC^*(G)$, is the VC dimension of $\cH(G)^*$. Similarly, the \emph{(dual) $2$-distance VC dimension} of $G$ is the $2$-VC dimension of $\cH(G)$ ($\cH(G)^*$, respectively).

We first give a relation between test covers in $\cH(G)$ and the metric dimension of $G$.

\begin{proposition}\label{prop:MDvsTC}
If $G$ is a graph of diameter $d$ and metric dimension $k$, then we have the following.
$$\frac{\TC(\cH(G))-1}{d} \leq k \leq \TC(\cH(G)).$$
\end{proposition}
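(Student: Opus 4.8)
The plan is to relate a test cover of $\cH(G)$ to a resolving set of $G$, in both directions. For the right-hand inequality $k \leq \TC(\cH(G))$, I would argue that any test cover $\cC$ of $\cH(G)$ yields a resolving set of comparable size. Each hyperedge in $\cC$ is a ball $B(x,\ell)$ for some vertex $x$ and radius $\ell$; I would take $R$ to be the set of centers $x$ of these balls. If two vertices $u,v$ have the same distance vector to $R$, then for every $x \in R$ and every radius $\ell$ we have $u \in B(x,\ell) \iff v \in B(x,\ell)$, so $u$ and $v$ are not separated by any hyperedge of $\cC$, contradicting that $\cC$ is a test cover. Hence $R$ resolves $G$ and $k = |R| \leq |\cC| = \TC(\cH(G))$. (Strictly, distinct balls in $\cC$ might share a center, which only makes $|R|$ smaller, so the inequality is safe.)

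For the left-hand inequality $\TC(\cH(G)) - 1 \leq dk$, I would go the other way: start from a resolving set $R = \{x_1,\dots,x_k\}$ and build a test cover of $\cH(G)$ from balls centered at the $x_i$. For each $x_i$ and each radius $\ell \in \{1,\dots,d\}$, include the ball $B(x_i,\ell)$; this gives a family $\cC$ of at most $dk$ hyperedges. Given distinct $u,v$, since $R$ resolves $G$ there is some $x_i$ with $d(x_i,u) \neq d(x_i,v)$, say $d(x_i,u) < d(x_i,v)$; then with $\ell = d(x_i,u)$ the ball $B(x_i,\ell)$ contains $u$ but not $v$, so $\cC$ separates every pair. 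To make $\cC$ an actual test cover I also need every vertex to be covered by some hyperedge; this is where the ``$+1$'' comes from — a single vertex $w$ (for instance, one attaining $d(x_i,w)=d$ for some $i$, or more simply any vertex, handled by adding one extra ball such as $B(x_i,d)$ if not already present, or by a direct covering argument) may fail to lie in any of the chosen balls of radius $< d$ centered away from it, but adding at most one more hyperedge fixes coverage. Hence $\TC(\cH(G)) \leq dk + 1$, i.e. $\TC(\cH(G)) - 1 \leq dk$, which rearranges to $\frac{\TC(\cH(G))-1}{d} \leq k$.

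The step I expect to require the most care is the ``$+1$'' bookkeeping in the lower bound: the definition of test cover requires that \emph{every} vertex be covered by some hyperedge, not merely that every pair be separated, so I must check whether the $dk$ balls of radii $1,\dots,d$ centered at resolving-set vertices already cover everything, or whether one extra ball (contributing the ``$+1$'') is genuinely needed — for example, a vertex $x_i$ of $R$ itself might not lie in $B(x_j,\ell)$ for any $j\neq i$ and small $\ell$, though it lies in $B(x_i,1)$ as long as $x_i$ has a neighbour, and an isolated-type corner case must be ruled out using diameter $d \geq 1$. Once coverage is settled the separation argument is routine. The upper-bound direction is essentially immediate once one observes that the centers of a test cover form a resolving set.
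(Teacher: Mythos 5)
Your first inequality ($k \le \TC(\cH(G))$, obtained by taking the centres of the balls of a minimum test cover) is correct and is exactly the paper's argument. The second inequality, however, has a genuine gap in the choice of radii. You take the balls $B(x_i,\ell)$ for $\ell\in\{1,\dots,d\}$ and propose to separate a pair $u,v$ with $d(x_i,u)<d(x_i,v)$ by the ball of radius $\ell=d(x_i,u)$; but when $u=x_i$ this radius is $0$, and $B(x_i,0)$ is not in your family. Concretely, for $G=P_3$ with vertices $a,b,c$ (edges $ab,bc$) and resolving set $\{a\}$, your family is $\{B(a,1),B(a,2)\}=\{\{a,b\},\{a,b,c\}\}$, which fails to separate $a$ from $b$: no ball centred at $a$ of radius at least $1$ can separate $a$ from one of its neighbours. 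You have also misplaced the ``$+1$'': since radius $d$ is included, $B(x_i,d)=V(G)$ and coverage is automatic, so your one extra hyperedge is reserved for a non-problem while the real failure (separation of pairs containing a vertex of $R$) is left unaddressed; patching it naively would require adding up to $k$ singletons $\{x_i\}$, yielding only $\TC(\cH(G))\le dk+k$. The paper's construction avoids all of this by taking radii $0,1,\dots,d-1$ at each centre --- so the singletons are present, and the separating radius $d(x_i,u)$ always lies in the range because $d(x_i,u)<d(x_i,v)\le d$ forces $d(x_i,u)\le d-1$ --- which costs $dk$ balls, and then spends the single extra hyperedge on one ball of radius $d$ to guarantee that every vertex is covered. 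That is the correct bookkeeping behind $\TC(\cH(G))\le dk+1$.
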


% [check the -1 : the balls of radius $d$ are not really interesting but we need to cover all the vertices...]

\begin{proof}
Let $T$ be a test  cover of $\cH(G)$. Then the set of centers of the balls corresponding to the  hyperedges of $T$ form a resolving set.
Indeed, let $x,y\in V$ and assume without loss of generality that there exists
$B\in T$ such that $x\in B$ and $y\notin B$. Let $v$ be the center of $B$ and let $r$ be its radius. Then $v$ resolves $\{x,y\}$ since $d(v,x) \leq r< d(v,y)$. This shows that $k \leq \TC(\cH(G))$.

Now let  $R$ be a resolving set and let $T$ be the set of balls centered in
vertices of $R$ for all radius from $0$ to $d-1$ plus any ball with radius $d$ . Then $T$ is a test cover of $\cH(G))$.
Indeed let $x,y\in V$ and let $z\in R$ such that $d(z,x)\neq d(z,y)$. Assume
without loss of generality that $d(z,x)<d(z,y)$. Then $d(z,x)< d$ and the ball centered in $z$
with radius $d(z,x)$ distinguishes $x$ and $y$. Thus, since any vertex is covered by the ball of radius $d$, the test cover $T$ has size $d|R| +1$.
%One can form a test cover by taking all the balls centered in some vertex of the resolving set.
\end{proof}

We deduce the following.

\begin{proposition}\label{md-vcdim} If $G$ is a graph of order $n$ with diameter $d$ and a resolving set of size $k$, then
$$n\leq (dk+1)^{\dVC^*(G)}+1.$$
\end{proposition}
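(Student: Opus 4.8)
The plan is to chain together the two results just established: Proposition~\ref{prop:MDvsTC}, which bounds $\TC(\cH(G))$ in terms of $k$ and $d$, and Proposition~\ref{prop:TCvsVC*}, which bounds $|V|$ in terms of $\TC(\cH)$ and $\VC^*(\cH)$. First I would note that $\cH(G)$ is twin-free: since $G$ has a resolving set (in particular $G$ has a well-defined metric dimension), any two distinct vertices $x,y$ are separated by some vertex $v$ with $d(v,x)\neq d(v,y)$, so one of the balls centered at $v$ contains exactly one of $x,y$. Hence Proposition~\ref{prop:TCvsVC*} applies to $\cH=\cH(G)$ and gives $n=|V|\leq (\TC(\cH(G)))^{\VC^*(\cH(G))}+1 = (\TC(\cH(G)))^{\dVC^*(G)}+1$.

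Next I would invoke the upper bound from Proposition~\ref{prop:MDvsTC}, namely $\TC(\cH(G))\leq d|R|+1 \le dk+1$ where $R$ is a resolving set of size $k$ (in fact the proof of that proposition exhibits a test cover of size exactly $d|R|+1$). Substituting this into the displayed inequality, and using that $x\mapsto x^{\dVC^*(G)}$ is increasing, yields
\[
n \leq (\TC(\cH(G)))^{\dVC^*(G)}+1 \leq (dk+1)^{\dVC^*(G)}+1,
\]
which is exactly the claimed bound.

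There is essentially no obstacle here — the proposition is a direct corollary of the two preceding propositions plus the elementary twin-free observation. The only point requiring a moment's care is the base case behaviour: if $\dVC^*(G)=0$ then the dual hypergraph shatters no singleton, meaning every hyperedge of $\cH(G)$ (equivalently, every ball) contains every vertex or no vertex; combined with twin-freeness this forces $n\le 1$, consistent with the bound $(dk+1)^0+1=2$. For all nontrivial graphs $\dVC^*(G)\ge 1$, so the bound is meaningful and the argument above goes through verbatim.
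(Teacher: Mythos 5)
Your proof is correct and follows exactly the paper's route: chain Proposition~\ref{prop:TCvsVC*} with the upper bound $\TC(\cH(G))\leq dk+1$ from Proposition~\ref{prop:MDvsTC}. The extra checks you include (twin-freeness of $\cH(G)$ and the degenerate case $\dVC^*(G)=0$) are sensible but the argument is the same as the paper's.
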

\begin{proof}
By Proposition~\ref{prop:TCvsVC*}, $n\leq (TC(\cH(G)))^{\dVC^*(G)}+1$. Then, by Proposition~\ref{prop:MDvsTC}, we have 
$\TC(\cH(G))\leq kd+1$. 
\end{proof}

% \todo[inline]{Flo: add here the corollaries for graphs of bounded rankwidth and with no $K_m$-minor (using the logarithmic relation between distance-VC dim of the hypergraph and its dual). The one for $K_m$-minor free ones will be improved in the next sub-section.}
% \todo[inline]{Flo: do we need the following proposition? I think not.}

 Proposition~\ref{md-vcdim} is useful when one can bound the dual distance-VC dimension of a graph. The next proposition gives a relation between $\dVC$ and $\dVC^*$.
%  When dealing with distance-VC dimension, we have other relations with the dual version than the logarithmic one:

\begin{proposition}\label{prop:md-vcdim-2} If $G$ is a graph of diameter $d$, then
$$\frac{1}{\log \dVC(G)}(\dVC(G)-\log d) \leq \dVC^*(G) \leq d \cdot \dVC(G).$$
\end{proposition}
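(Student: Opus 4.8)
\textbf{Proof plan for Proposition~\ref{prop:md-vcdim-2}.}

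The plan is to derive both inequalities from the general relation between the VC dimension of a hypergraph and that of its dual, namely $\log(\VC^*(\cH)) \leq \VC(\cH) \leq 2^{\VC^*(\cH)}$ quoted from~\cite{A83}, but applied in a sharpened form using the fact that $\cH(G)$ has only $\mathcal{O}(dn)$ hyperedges rather than $2^n$. Recall that $\cH(G)$ has vertex set $V(G)$ and one hyperedge per ball $B(v,\ell)$ with $0\le \ell\le d$; duplicate balls are identified, so there are at most $d+1$ hyperedges per center, hence $|\cE(\cH(G))| \le (d+1)n$. This bounded edge count is precisely what lets us beat the crude exponential bounds.

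For the upper bound $\dVC^*(G) \leq d\cdot\dVC(G)$: let $\cA$ be a shattered set of hyperedges of $\cH(G)^*$, i.e.\ a family of balls that is shattered in the dual, of size $m := \dVC^*(G)$. Shattering in the dual means that $2^m$ distinct vertices of $G$ realize all $2^m$ possible incidence patterns with $\cA$; in particular $\cH(G)$ restricted to these $2^m$ vertices has at least $2^m$ distinct traces of hyperedges. Since $\cH(G)$ has at most $(d+1)n$ hyperedges, it has at most $(d+1)n$ distinct traces on any vertex subset, but that is not quite the argument — instead I would apply the Sauer--Shelah Lemma (Lemma~\ref{lem:Sauer}) to the $2^m$-element vertex set $X$: $|\cH(G)_{|X}| \le |X|^{\dVC(G)}+1 = 2^{m\,\dVC(G)}+1$. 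On the other hand, distinct vertices of $X$ have distinct incidence vectors with $\cA \subseteq \cE(\cH(G))$, so the traces of the hyperedges in $\cA$ on $X$ already number $2^m$; more to the point, I should count the other way. The cleaner route: the number of hyperedges of $\cH(G)$ is $N \le (d+1)n$, and shattering a dual set of size $m$ forces $2^m \le N$-many... this still gives only a logarithmic bound. The right tool is: apply Sauer--Shelah to the \emph{dual} hypergraph $\cH(G)^*$ on the ground set $\cA$, giving $2^m = |\cH(G)^*_{|\cA}| \le |\cA|^{\dVC(G)} = m^{\dVC(G)}$... no. I think the intended argument is simply the first half of the $\cite{A83}$ inequality combined with edge-counting: by definition $\VC(\cH) \le 2^{\VC^*(\cH)}$ applied to $\cH = \cH(G)^*$ gives $\dVC^*(G) = \VC(\cH(G)^*) \le 2^{\VC^*(\cH(G)^*)} = 2^{\dVC(G)}$; to get the sharper linear bound $d\cdot\dVC(G)$ one observes that $\cH(G)^*$ has exactly $n$ hyperedges (one per vertex of $G$) but its ground set has size $N\le(d+1)n$, and a standard refinement of the $2^{(\cdot)}$ bound says $\VC(\cH) \le \log_2(\text{number of edges of }\cH)$ whenever... again logarithmic. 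Given the target $d\cdot\dVC(G)$, the honest plan is: let $X$ be a set shattered in $\cH(G)^*$ with $|X| = \dVC^*(G)$; the elements of $X$ are balls, and a set of balls $\{B_1,\dots,B_m\}$ is shattered dually iff the $m$ map $v \mapsto (\mathbf 1[v\in B_i])_i$ is surjective onto $\{0,1\}^m$. Each $B_i = B(c_i,\ell_i)$; grouping the $c_i$ is not bounded, but each center contributes at most $d+1$ of the $B_i$, so among $m$ balls there are at least $\lceil m/(d+1)\rceil$ distinct centers. Now a set of $t$ balls with \emph{distinct} centers that is shattered dually yields $2^t$ vertices with distinct distance-vectors to the $t$ centers, hence the $t$ centers form a set shattered in $\cH(G)$ after... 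This gives $\lceil m/(d+1)\rceil \le$ something like $\dVC(G)$, i.e.\ $m \le (d+1)\dVC(G)$, matching $d\cdot\dVC(G)$ up to the additive constant. So \textbf{the main obstacle} is making precise the step "balls with distinct centers, shattered in the dual, give a set shattered in $\cH(G)$" — one must choose for each center an appropriate radius and verify the $2^t$ realizing vertices witness shattering of the center-set; this requires a short combinatorial argument about distance thresholds.

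For the lower bound $\dfrac{1}{\log\dVC(G)}(\dVC(G)-\log d) \le \dVC^*(G)$: this is the dual direction. Set $p := \dVC(G)$ and let $Y$ be a vertex set of size $p$ shattered in $\cH(G)$, witnessed by balls $B_1,\dots,B_{2^p}$ (one per subset of $Y$, with $B_j \cap Y$ equal to the $j$-th subset). Among these $2^p$ balls, since each center accounts for at most $d+1$ radii, there are at least $2^p/(d+1)$ distinct centers, hence a subfamily $\cB$ of $\ge 2^p/(d+1)$ balls with pairwise distinct centers and with pairwise distinct traces on $Y$. Restricting $\cH(G)^*$ to the ground set $Y$: the traces of these $|\cB|$ balls on $Y$ are all distinct, so $|\cH(G)^*_{|\mathcal B'}| \ge$ ... apply Sauer--Shelah (Lemma~\ref{lem:Sauer}) to $\cH(G)^*$ with the vertex-subset being $\cB$ (a set of hyperedges-of-$\cH(G)$, i.e.\ vertices-of-$\cH(G)^*$): $|\cH(G)^*_{|\cB}| \le |\cB|^{\dVC^*(G)}+1$. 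The left side is at least the number of distinct traces that the hyperedges of $\cH(G)^*$ (= vertices of $G$, in particular the $p$ vertices of $Y$) induce on $\cB$; since the $|\cB|$ balls shatter... concretely the $p$ vertices of $Y$ induce on $\cB \subseteq \cE(\cH(G))$ all $2^p$ — wait, $\cB$ only has $\ge 2^p/(d+1)$ of the $2^p$ balls, so $Y$ induces on $\cB$ at least $|\cB| \ge 2^p/(d+1)$ distinct patterns (distinct balls give distinct columns since their $Y$-traces differ). Hence $2^p/(d+1) \le |\cH(G)^*_{|\cB}| \le |\cB|^{\dVC^*(G)}+1 \le (2^p)^{\dVC^*(G)}$, and taking $\log_2$: $p - \log_2(d+1) \le p\cdot\dVC^*(G)$, which after absorbing constants rearranges to $\dVC^*(G) \ge \dfrac{p-\log d}{p} $; to get the stated $\dfrac{p - \log d}{\log p}$ one should instead bound $|\cB|^{\dVC^*(G)}$ more carefully — use $|\cB| \le 2^p$ but also $|\cH(G)^*_{|\cB}|\le 2^p$ trivially and the Sauer bound with $|\cB|\le |V(\cH(G)^*)\text{-side}|$... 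The cleanest is: the $p$ vertices of $Y$, viewed in $\cH(G)^*$, are shattered-ish; more precisely $Y$ is a set of $p$ hyperedges of $\cH(G)^*$ whose traces on the ground set are rich, and one applies Sauer--Shelah to see $2^p \le (\text{number of relevant dual-vertices})^{\dVC^*(G)} \le (d\cdot n')^{\dVC^*}$... I will follow the route that yields exactly $\dVC(G) - \log d \le \dVC^*(G)\cdot \log\dVC(G)$: take the $2^p$ balls witnessing shattering of $Y$, discard duplicates-by-center to get $\ge 2^p/(d+1)$ balls with distinct centers, note these centers form a set of size $q \ge 2^p/(d+1)$ that is "2-shattered-ish" — actually simpler, these $q$ centers, each with its chosen radius, form $q$ hyperedges of $\cH(G)$, and the $p$ vertices of $Y$ realize $q$ distinct incidence patterns on them, so by Sauer--Shelah applied to $\cH(G)^*$ on the set of these $q$ dual-vertices, $q \le p^{\dVC^*(G)}$, giving $2^p/(d+1) \le p^{\dVC^*(G)}$, hence $p - \log_2(d+1) \le \dVC^*(G)\log_2 p$, i.e.\ $\dVC^*(G) \ge \dfrac{p - \log_2(d+1)}{\log_2 p} \ge \dfrac{\dVC(G) - \log d}{\log\dVC(G)}$ for $d\ge 1$. \textbf{The main obstacle here} is the same as before in mirror image: justifying that $q$ distinct centers can be equipped with radii so that distinct balls yield distinct columns over $Y$ and that Sauer--Shelah applies cleanly to the dual on this $q$-element subset. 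Modulo that bookkeeping, both bounds fall out of Lemma~\ref{lem:Sauer} plus the $(d+1)$-per-center counting.
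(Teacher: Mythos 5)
Your overall skeleton (a pigeonhole losing a factor of $d$, followed by Sauer--Shelah) matches the paper's, but you pigeonhole on the \emph{centers} of the balls whereas the paper pigeonholes on the \emph{radii}, and this difference is fatal: the one ingredient that makes the transfer steps work is absent from your plan. The paper's key observation is that for a fixed radius $\ell$, the hypergraph $\cH_\ell(G)$ of all balls of radius exactly $\ell$ is isomorphic to its own dual (because $d(u,v)\le\ell$ iff $d(v,u)\le\ell$), so $\VC(\cH_\ell(G))=\VC^*(\cH_\ell(G))\le \VC^*(\cH(G))=\dVC^*(G)$, and symmetrically in the other direction. Since only about $d$ radii occur, a most-popular-radius argument retains a $1/d$ fraction of any witness family while landing inside a self-dual hypergraph, and both inequalities then drop out of Lemma~\ref{lem:Sauer} applied to $\cH_\ell(G)$. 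Your plan never restricts to a single radius, and the two steps you yourself flag as ``the main obstacle'' are exactly where this bites.

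Concretely: for the upper bound, your claimed transfer ``a dually shattered family of balls with distinct centers yields a shattered set of centers in $\cH(G)$'' is false when the radii $\ell_i$ differ. The dual-shattering witness $v_J$ only guarantees $d(v_J,c_i)\le\ell_i$ for $i\in J$ and $d(v_J,c_i)>\ell_i$ for $i\notin J$; to capture exactly $\{c_i : i\in J\}$ with a single ball centered at $v_J$ you need one radius $r$ separating the two groups, and nothing prevents, say, $d(v_J,c_1)=5\le\ell_1$ while $d(v_J,c_2)=2>\ell_2=1$, in which case no such $r$ exists. For the lower bound, the quantity you actually control is the number of distinct traces of your $q$ balls on the shattered set $Y$ (a statement about the primal hypergraph whose edge set is those $q$ balls), whereas the Sauer--Shelah application you invoke bounds $\lvert\cH(G)^*_{|\mathcal B}\rvert$, i.e.\ the traces of \emph{vertices} on the balls; these are different counts, and ``$p$ vertices realize $q$ distinct incidence patterns'' cannot hold for $q>p$. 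The bridge between the two counts is exactly the self-duality of $\cH_\ell(G)$, which is only available once all balls in play share one radius. Replace your per-center pigeonhole by a per-radius pigeonhole and insert that self-duality observation, and both halves go through along the lines you sketch.
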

%\todo[inline]{It should be d+1}
\begin{proof}
For the first inequality, let $k$ denote $\dVC(G)$. Let $S$ be a
shattered set of $\cH(G)$ of size $k$. For each subset $X$ of $S$,
there exists a ball $B$ such that $B\cap S=X$. Let $\mathcal B$ be the
set of those balls.  Among all the radii used in $\mathcal B$, let us
consider the most used $\ell$ and let $\mathcal{B}_{\ell}$ be the set
of balls of $\mathcal{B}$ of radius $\ell$. We have $|\mathcal
B_{\ell}| \geq \frac{|\mathcal B|}{d}\geq
\frac{2^{k}}{d}$. Considering the hypergraph $\cH_{\ell}(G)$ formed by
all balls of $G$ of radius $\ell$, we have $|\cH_{\ell}(G)_{|S}|\geq
|\mathcal{B}_{\ell}|\geq \frac{2^{k}}{d}$ and then, by
Lemma~\ref{lem:Sauer}, $\frac{2^{k}}{d}\leq k^{\VC(\cH_{\ell}(G))}$
which implies that $\frac{k-log(d)}{log(k)}\leq
\VC(\cH_{\ell}(G))$. Now since $\cH_{\ell}$ is isomorphic to its dual
we have $\frac{k-log(d)}{log(k)}\leq \VC^{*}(\cH_{\ell})\leq
\VC^{*}(\cH(G))=\dVC^{*}(G)$.

For the second inequality, let $S$ be a shattered set of $\cH(G)^{*}$ of size $\dVC^*(G)$. Let $\ell$ be the most used radius in $S$ and let $S_{\ell}$ be the set of balls of $S$ of radius $\ell$. Let $\cH_{\ell}(G)$ be the the hypergraph formed by all balls of $G$ of radius $\ell$. Notice that $\cH_{\ell}(G)$ is isomorphic to its dual $\cH_{\ell}(G)^{*}$ and then $\VC(H_{\ell}(G))=\VC(H_{\ell}(G)^{*})$. Observe now that $S_{\ell}$ is a shattered set of $\cH_{\ell}(G)^{*}$ and since $|S_{\ell}|\geq \frac{\dVC^*(G)}{d}$ we have $\dVC(G)\geq \VC(H_{\ell}(G))\geq \frac{\dVC^*(G)}{d}$.
% Then  consider the radius $\ell$ that is the most used, then it gives a shattered set in the dual of $\cH_\ell(G)$ which is autodual and that is a shattered set of $\cH(G)$.
\end{proof}

Bousquet and Thomassé proved that graphs of bounded rankwidth\footnote{We do not define this concept here, since we barely use it, and refer the reader to~\cite{BT15} instead. Note that any graph of bounded treewidth or cliquewidth also has bounded rankwidth.} and $K_t$-minor free graphs have bounded distance $2$-VC dimension (and thus, bounded distance-VC dimension).

\begin{theorem}[\cite{BT15}]\label{thm:2vc_bounded}
A $K_t$-minor-free graph has distance $2$-VC dimension at most $t-1$. The distance $2$-VC dimension of a graph with rankwidth~$r$ is at most $3\cdot 2^{r}+2$.
\end{theorem}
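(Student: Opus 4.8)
\medskip
\noindent\textbf{Proof proposal.}
Both bounds are from~\cite{BT15}; the natural strategy is to turn a large $2$-shattered set of the distance hypergraph $\cH(G)$ into a forbidden substructure. For the first bound I would prove the stronger statement that if $\cH(G)$ admits a $2$-shattered set $X=\{v_1,\dots,v_m\}$, then $G$ has $K_m$ as a minor; this gives $m\le t-1$ whenever $G$ is $K_t$-minor-free, and hence bounds the distance $2$-VC dimension by $t-1$. Since every ball lies inside a connected component, $X$ lies in a single component and we may assume $G$ is connected. For each pair $\{i,j\}$ fix, using $2$-shatteredness, a ball $B_{ij}=B(c_{ij},r_{ij})$ with $B_{ij}\cap X=\{v_i,v_j\}$; equivalently, $d(c_{ij},v_i),d(c_{ij},v_j)\le r_{ij}$ while $d(c_{ij},v_k)>r_{ij}$ for every $k\notin\{i,j\}$.

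The branch sets will be ``multi-source Voronoi regions'': run a breadth-first search with the whole of $X$ as the set of sources, let each vertex $u\notin X$ inherit the label of one of its BFS predecessors ($v_i$ carrying label $i$), and let $V_i$ be the BFS tree rooted at $v_i$. Then $V_1,\dots,V_m$ are pairwise disjoint connected sets covering $V(G)$, with $v_i\in V_i$. The key point is to exhibit, for every pair $\{i,j\}$, an edge between $V_i$ and $V_j$. A one-line triangle-inequality computation shows that for any vertex $w$ on a shortest $c_{ij}$--$v_j$ path and any $k\notin\{i,j\}$, $d(w,v_k)\ge d(c_{ij},v_k)-d(c_{ij},w)>r_{ij}-d(c_{ij},w)\ge d(c_{ij},v_j)-d(c_{ij},w)=d(w,v_j)$; hence no $v_k$ with $k\notin\{i,j\}$ is a nearest vertex of $X$ to $w$. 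Since the property ``the nearest vertices of $X$ all lie in $\{v_i,v_j\}$'' is inherited by BFS predecessors and bottoms out at a vertex of $X$, it forces $w\in V_i\cup V_j$; the same applies to $c_{ij}$ itself, so $c_{ij}\in V_i$ or $c_{ij}\in V_j$. Using a shortest $c_{ij}$--$v_j$ path in the first case and a shortest $c_{ij}$--$v_i$ path in the second, we obtain a path that lies in $V_i\cup V_j$ and joins $V_i$ to $V_j$, hence an edge between them. Thus $V_1,\dots,V_m$ form a $K_m$ minor and $m\le t-1$.

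For the rankwidth bound I would follow~\cite{BT15} and argue through a rank decomposition of width $r$. Given a $2$-shattered set $X$, one first selects an edge of the decomposition tree whose induced bipartition $(A,B)$ of $V(G)$ splits $X$ in a balanced way (each side keeping at least $|X|/3$ elements), noting that its cut-rank is at most $r$, so that the vertices on each side realise at most $2^{r}$ distinct adjacency traces across the cut. The core is then a counting lemma: a ball isolating a crossing pair $\{v_i,v_j\}$ with $v_i\in A$ and $v_j\in B$ is tightly constrained by these $\le 2^{r}$ adjacency types, and confronting the number of crossing pairs with the number of available types (plus the handful of pairs entirely inside $A$ or inside $B$) yields $|X|\le 3\cdot 2^{r}+2$. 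Making this counting precise --- quantifying exactly how low $GF(2)$-rank of the bipartite adjacency matrix limits the ways a ball can cut $X$ across the bipartition, which is where the constants $3$ and $2$ originate --- is the step I expect to be the main obstacle; once the lemma is in hand the bound follows from a routine balanced-split argument.
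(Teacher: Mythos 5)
First, note that the paper never proves this statement: it is imported verbatim from \cite{BT15}, so the only in-paper material to measure your attempt against is the ``dual'' adaptation, Theorem~\ref{thm:2-VC-minorfree}. Your argument for the first half is correct and complete. The chain $d(w,v_k)\ge d(c_{ij},v_k)-d(c_{ij},w)>r_{ij}-d(c_{ij},w)\ge d(c_{ij},v_j)-d(c_{ij},w)=d(w,v_j)$ is valid for $w$ on a shortest $c_{ij}$--$v_j$ path; the property ``every nearest vertex of $X$ lies in $\{v_i,v_j\}$'' is indeed inherited by BFS predecessors (a strictly closer $v_k$ for the predecessor would give an at-least-as-close $v_k$ for $w$), so it propagates down to a root in $\{v_i,v_j\}$; and the multi-source Voronoi trees $V_1,\dots,V_m$ are disjoint, connected, and pairwise joined by an edge found on the appropriate shortest path from $c_{ij}$. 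This is essentially the Bousquet--Thomass\'e construction. The paper's Theorem~\ref{thm:2-VC-minorfree} runs the analogous argument in the dual setting, building its branch sets from labels assigned to intersections of ``good paths'' and needing several separate intersection claims to get disjointness and connectivity; your BFS-region version gets both for free from the predecessor forest, which is a genuine simplification (though it proves the primal statement, not the dual one the paper actually needs).

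The second half is where the genuine gap is. Your skeleton (a balanced edge of the rank decomposition, at most $2^{r}$ adjacency traces on each side) is the right starting point, but the ``counting lemma'' you defer is the entire content of the proof, and the version you state --- that a ball isolating a crossing pair is ``tightly constrained by these adjacency types'' --- is not true as written: a ball of radius $\rho$ is a set of the form $\{u : d(u,c)\le\rho\}$, and distances in $G$ are not functions of the one-step adjacency traces across the cut, since shortest paths may cross the cut arbitrarily many times. Converting low $GF(2)$ cut-rank into control over how balls of arbitrary radius trace on $X$ is precisely the technical heart of the argument in \cite{BT15}, and nothing in your sketch substitutes for it; you even flag it yourself as ``the main obstacle.'' As it stands the rankwidth bound remains unproved, so you should either supply that lemma in full or do what the paper does and simply cite \cite{BT15}.
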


Since the distance $2$-VC dimension is always larger than the distance-VC dimension and using Proposition~\ref{prop:md-vcdim-2}, we have the following corollaries of Proposition~\ref{md-vcdim}.

\begin{corollary}\label{cor:minor-rankwidth-bounds}
Let $G$ be a graph of order $n$, diameter $d$ and with a resolving set of size $k$. If $K_t$ is not a minor of $G$, then
$$n\leq (dk+1)^{d(t-1)}+1.$$

If $G$ has rankwidth at most $r$, then
$$n\leq (dk+1)^{d(3\cdot 2^{r}+2)}+1.$$
\end{corollary}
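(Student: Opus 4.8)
The plan is to derive Corollary~\ref{cor:minor-rankwidth-bounds} as a direct consequence of Proposition~\ref{md-vcdim} combined with Theorem~\ref{thm:2vc_bounded} and Proposition~\ref{prop:md-vcdim-2}. Proposition~\ref{md-vcdim} already gives $n\leq (dk+1)^{\dVC^*(G)}+1$, so the only work is to bound $\dVC^*(G)$ in terms of the structural parameter ($t$ for $K_t$-minor-free, $r$ for rankwidth). First I would recall that the distance $2$-VC dimension always dominates the distance-VC dimension (as remarked just before Theorem~\ref{thm:2vc_bounded}), so Theorem~\ref{thm:2vc_bounded} yields $\dVC(G)\leq t-1$ when $K_t$ is not a minor of $G$, and $\dVC(G)\leq 3\cdot 2^{r}+2$ when $G$ has rankwidth at most $r$.

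Next I would invoke the right-hand inequality of Proposition~\ref{prop:md-vcdim-2}, namely $\dVC^*(G)\leq d\cdot\dVC(G)$. Plugging in the two bounds on $\dVC(G)$ from the previous paragraph gives $\dVC^*(G)\leq d(t-1)$ in the minor-free case and $\dVC^*(G)\leq d(3\cdot 2^{r}+2)$ in the rankwidth case. Substituting these into Proposition~\ref{md-vcdim} immediately produces the two stated bounds $n\leq (dk+1)^{d(t-1)}+1$ and $n\leq (dk+1)^{d(3\cdot 2^{r}+2)}+1$.

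There is essentially no obstacle here: the statement is a straightforward chaining of three previously established facts, and the "hard part"—bounding the distance $2$-VC dimension of minor-free and bounded-rankwidth graphs—has already been done by Bousquet and Thomassé and quoted as Theorem~\ref{thm:2vc_bounded}. The only point requiring a word of care is the logical direction of the inequalities: we need an upper bound on $\dVC^*(G)$, so we use the $\dVC^*(G)\leq d\cdot\dVC(G)$ side of Proposition~\ref{prop:md-vcdim-2} and not the other one, and we need $\dVC(G)$ bounded (not $\mathrm{dvc}_2(G)$), which is legitimate precisely because $\dVC(G)$ is at most the distance $2$-VC dimension. The proof is therefore just a two- or three-line derivation.

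\begin{proof}
By the remark preceding Theorem~\ref{thm:2vc_bounded}, the distance-VC dimension of a graph is at most its distance $2$-VC dimension. Hence, by Theorem~\ref{thm:2vc_bounded}, if $K_t$ is not a minor of $G$ then $\dVC(G)\leq t-1$, and if $G$ has rankwidth at most $r$ then $\dVC(G)\leq 3\cdot 2^{r}+2$. Applying the second inequality of Proposition~\ref{prop:md-vcdim-2}, we obtain $\dVC^*(G)\leq d\cdot\dVC(G)$, so $\dVC^*(G)\leq d(t-1)$ in the first case and $\dVC^*(G)\leq d(3\cdot 2^{r}+2)$ in the second. Plugging these bounds into Proposition~\ref{md-vcdim}, which states $n\leq (dk+1)^{\dVC^*(G)}+1$, yields $n\leq (dk+1)^{d(t-1)}+1$ when $K_t$ is not a minor of $G$, and $n\leq (dk+1)^{d(3\cdot 2^{r}+2)}+1$ when $G$ has rankwidth at most $r$.
\end{proof}
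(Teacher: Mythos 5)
Your derivation is correct and is exactly the route the paper takes: the authors state the corollary immediately after remarking that the distance $2$-VC dimension dominates the distance-VC dimension, and they chain Theorem~\ref{thm:2vc_bounded}, the inequality $\dVC^*(G)\leq d\cdot\dVC(G)$ from Proposition~\ref{prop:md-vcdim-2}, and Proposition~\ref{md-vcdim} in the same way you do. Nothing is missing.
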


\subsection{The dual $2$-distance VC dimension and $K_t$-minor free graphs}

In this section, we improve the bound of Corollary~\ref{cor:minor-rankwidth-bounds} for $K_t$-minor-free graphs.

\begin{theorem}\label{thm:minorfree}
If $G$ is a $K_t$-minor-free graph of diameter $d$ and order $n$, with a resolving set of size $k$, then $n\leq (dk+1)^{t-1}+1$.
\end{theorem}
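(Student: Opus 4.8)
The plan is to combine Proposition~\ref{md-vcdim} with a stronger bound on the dual distance-VC dimension than the one coming from Theorem~\ref{thm:2vc_bounded} together with Proposition~\ref{prop:md-vcdim-2} (which would only give $\dVC^*(G)\leq d(t-1)$, hence the weaker exponent $d(t-1)$ in Corollary~\ref{cor:minor-rankwidth-bounds}). Concretely, the goal is to prove directly that a $K_t$-minor-free graph $G$ satisfies $\dVC^*(G)\leq t-1$, and then Proposition~\ref{md-vcdim} immediately yields $n\leq (dk+1)^{t-1}+1$. So the whole work is in establishing $\dVC^*(G)\leq t-1$ for $K_t$-minor-free $G$.

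To bound $\dVC^*(G)$, suppose for contradiction that $\cH(G)^*$ shatters a set of $t$ balls, say $B_1,\dots,B_t$ with centers $c_1,\dots,c_t$ and radii $\rho_1,\dots,\rho_t$. Shattering of $\{B_1,\dots,B_t\}$ in the dual means: for every subset $J\subseteq\{1,\dots,t\}$ there is a vertex $v_J\in V(G)$ with $v_J\in B_i$ iff $i\in J$, i.e. $d(v_J,c_i)\leq \rho_i$ exactly for $i\in J$. The natural idea is to build a $K_t$-minor out of this structure: the $t$ branch sets will be built around the centers $c_i$, and the $2^t$ ``selector'' vertices $v_J$ (or rather the shortest paths from them to the centers) will be used to wire up the required adjacencies between branch sets. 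The key observation is that for a pair $\{i,j\}$, the vertex $v_{\{i,j\}}$ lies in $B_i\cap B_j$ and in no other $B_m$, so a shortest path from $v_{\{i,j\}}$ to $c_i$ stays ``close'' to $c_i$ and a shortest path to $c_j$ stays close to $c_j$; gluing these gives a connection between a neighbourhood of $c_i$ and a neighbourhood of $c_j$. I would first handle the cleaner case where all radii are equal, say all equal to $\ell$ — then $\cH_\ell(G)$ is self-dual, so a shattered set of $t$ balls in the dual is literally a set $\{c_1,\dots,c_t\}$ of $t$ vertices shattered by the radius-$\ell$ balls, and one recovers exactly the hypothesis that Bousquet–Thomassé use to extract a $K_t$-minor in their proof of Theorem~\ref{thm:2vc_bounded}; indeed their argument shows $\VC(\cH_\ell(G))\leq t-1$, even the $2$-VC version. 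The point of the present theorem is that the $\emph{dual}$ distance-VC dimension (over all radii simultaneously) is also bounded by $t-1$, not just $\VC$ of each fixed-radius layer.

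For the general case with possibly distinct radii $\rho_i$, here is the construction I would carry out. Define, for each $i$, the branch set $V_i$ to consist of $c_i$ together with, for each $j\neq i$, the portion of a shortest $c_i$–$v_{\{i,j\}}$ path; then enlarge the $V_i$ to make them pairwise disjoint and connected and covering the relevant paths (one has to be a little careful: a shortest path from $v_{\{i,j\}}$ to $c_i$ has length $\leq\rho_i$ and might intersect a ball $B_m$ for $m\notin\{i,j\}$, but it cannot contain $c_m$ since $v_{\{i,j\}}\notin B_m$ forces $d(v_{\{i,j\}},c_m)>\rho_m$ while the path from $v_{\{i,j\}}$ to $c_i$ has all its vertices within distance $\rho_i$ of $v_{\{i,j\}}$ — one then routes around intersections, or argues inductively deleting a branch set at a time). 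For each pair $\{i,j\}$ the vertex $v_{\{i,j\}}$ and the two shortest paths to $c_i$ and $c_j$ give a walk from $V_i$ to $V_j$; assigning each internal vertex of this walk to whichever of $V_i,V_j$ it is ``closer'' to (and discarding/reassigning vertices already claimed) yields adjacent branch sets $V_i,V_j$. Doing this consistently for all $\binom{t}{2}$ pairs, while keeping the branch sets disjoint, produces a $K_t$-minor, contradicting $K_t$-minor-freeness; hence no shattered set of size $t$ exists and $\dVC^*(G)\leq t-1$.

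The main obstacle, and the step that needs the most care, is exactly the disjointness bookkeeping in the minor construction: a priori the $\binom{t}{2}$ connecting walks and the initial branch sets can overlap arbitrarily, and one must argue that they can always be pruned/rerouted so that the $V_i$ remain connected, pairwise disjoint, and pairwise adjacent. The clean way is probably to mimic Bousquet–Thomassé's argument (it is essentially a BFS/contraction argument around the centers): contract, for each $i$, a BFS-tree-like connected subgraph around $c_i$ of radius chosen so that it meets all $v_{\{i,j\}}$ for the ``small'' pairs, remove overlaps greedily, and use the fact that $v_{\{i,j\}}\notin B_m$ for $m\notin\{i,j\}$ to guarantee that the connection for pair $\{i,j\}$ can be confined to $V_i\cup V_j$. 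Once the equal-radius case is isolated one sees it is \emph{exactly} their lemma, so the honest statement of the proof is: reduce $\dVC^*(G)$ to a fixed-radius shattering by a self-duality/pigeonhole step is \emph{not} available here (that loses the factor $d$), so instead one replays their minor extraction directly with mixed radii, which goes through essentially verbatim because their argument never used that all radii coincide — only that each $v_J$ is inside exactly the balls indexed by $J$.
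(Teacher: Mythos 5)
Your high-level plan is exactly the paper's: reduce to showing that a $K_t$-minor-free graph has dual distance-VC dimension (in fact dual distance $2$-VC dimension) at most $t-1$, and then apply Proposition~\ref{md-vcdim}. You also correctly identify that only the pair-selector vertices $x_{ij}=v_{\{i,j\}}$ are needed, and that the intended mechanism is to extract a $K_t$-minor from the centers, the $x_{ij}$, and shortest paths between them. This is precisely Theorem~\ref{thm:2-VC-minorfree} in the paper.

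However, there is a genuine gap: the entire content of that theorem is the disjointness and connectivity bookkeeping that you explicitly defer with phrases such as ``routes around intersections,'' ``remove overlaps greedily,'' and ``goes through essentially verbatim.'' None of the structural facts that make the construction work are established in your proposal. The paper needs, and proves, the following: two good paths $P_{ij}$ and $P_{kl}$ with $\{i,j\}\cap\{k,l\}=\emptyset$ are vertex-disjoint (Claim~\ref{lem:four_distincts}); if $P_{ij}$ and $P_{ik}$ meet at a vertex $z$, then $x_{ij}$ and $x_{ik}$ cannot both lie on the $v_i$-side of $z$ (Claim~\ref{lem:part}); and three good paths $P_{ij},P_{ik},P_{jk}$ have no common vertex (Claim~\ref{lem:three_distincts}). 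These distance-inequality arguments are what allow one to give each vertex at most one label $i$, to show each label class $C_i$ is connected, and to obtain disjoint connectors between the $C_i$. Without Claim~\ref{lem:four_distincts}, for instance, a single vertex could be demanded by two branch sets with disjoint index pairs, and no greedy reassignment is guaranteed to preserve connectivity; your ``assign each internal vertex to whichever side it is closer to'' rule is exactly the kind of step that needs these claims to be well defined and consistent across all $\binom{t}{2}$ pairs simultaneously. Finally, the assertion that the Bousquet--Thomass\'e argument ``never used that all radii coincide'' and therefore transfers verbatim is a claim about someone else's proof rather than a proof: in the dual setting the roles of ball centers and selector vertices are swapped (the radii are attached to the $v_i$, not to the $x_{ij}$), and the paper does have to rework the inequalities accordingly rather than quote the primal result.
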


To prove Theorem~\ref{thm:minorfree}, we combine Proposition~\ref{md-vcdim} with the following theorem, which is a ``dual'' version of Theorem~\ref{thm:2vc_bounded}. We denote the length of a path $P$ by $\ell(P)$.

\begin{theorem}\label{thm:2-VC-minorfree}
If the dual distance $2$-VC dimension of a graph $G$ is at least $t$, then $K_t$ is a minor of $G$.
\end{theorem}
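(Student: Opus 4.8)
The plan is to take a set $\mathcal{B} = \{B_1,\dots,B_t\}$ of balls witnessing that $\cH(G)^*$ has a $2$-shattered set of size $t$, and to build from it the $t$ branch sets of a $K_t$-minor. By definition of $2$-shattering in the dual, for every pair $\{i,j\}\subseteq\{1,\dots,t\}$ there is a vertex $v_{ij}\in V(G)$ that lies in $B_i$ and $B_j$ and in no other ball of $\mathcal{B}$ (the dual is $2$-shattered, so each $2$-subset of the ``vertices'' of $\cH(G)^*$ — i.e. the balls — is realized exactly by some ``edge'' of $\cH(G)^*$ — i.e. some vertex of $G$). Write $c_i$ for the center of $B_i$ and $r_i$ for its radius. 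For each $i$, I would take $P_{ij}$ to be a shortest $c_i$–$v_{ij}$ path in $G$; since $v_{ij}\in B_i$ we have $\ell(P_{ij})\le r_i$, so every internal vertex of $P_{ij}$ is at distance $<r_i$ from $c_i$, hence lies in $B_i$. The candidate branch set for colour $i$ is $V_i := \{c_i\}\cup\bigcup_{j\ne i} V(P_{ij})$, which is connected (all the paths emanate from $c_i$) and, crucially, is contained in $B_i$.

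The key structural point is disjointness of the branch sets. Because $V_i\subseteq B_i$ and $V_j\subseteq B_j$, any vertex in $V_i\cap V_j$ lies in $B_i\cap B_j$; by the $2$-shattering hypothesis the only vertex that can be in $B_i\cap B_j$ while being a ``legal'' witness is $v_{ij}$ — but in general $B_i\cap B_j$ could be larger, so this needs a little care. The clean way around it: I would instead choose the paths carefully and, if $V_i$ and $V_j$ intersect in a vertex other than $v_{ij}$, reroute — or, more robustly, contract. Here is the robust version. Let $W = \bigcup_i V_i$, and consider the subgraph $G[W]$. In $G[W]$, repeatedly contract each $V_i$ to a single vertex; this is possible because each $G[V_i]$ is connected. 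After contraction the vertex sets need not be disjoint, so the standard trick is: take a maximal subset of the edges/paths so that the $V_i$'s can be made disjoint by deleting shared vertices from all but one branch set — but this can disconnect a branch set. The cleanest fix, and the one I expect to be the intended argument, is to observe that it suffices to exhibit connected, pairwise disjoint $V_i$ with an edge between $V_i$ and $V_j$ for all $i\ne j$; and to get disjointness, process the pairs and whenever $u\in V_i\cap V_j$ with $u\ne v_{ij}$, note $u\in B_i\cap B_j$, so $u\ne v_{ik}$ for $k\notin\{i,j\}$ forces nothing — hence I would instead define things so that the only forced shared vertices are the $v_{ij}$, and handle a shared $v_{ij}$ by assigning it to $V_i$ and rerouting $P_{jj'}$ for $j'$ through $c_j$ avoiding it, using that $v_{ij}$ has all its other ball-memberships outside $\mathcal{B}$.

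The adjacency requirement is easy: the edge of $P_{ij}$ incident to $v_{ij}$ connects $V_i$ to the vertex $v_{ij}$, and $v_{ij}\in V_j$ as well, so (once $v_{ij}$ is placed in exactly one of $V_i$, $V_j$) the last edge of the path on the other side gives an edge between the two branch sets; if $v_{ij}$ ends up in $V_i$, then $P_{ij}$ restricted to $c_j$ up to the vertex just before $v_{ij}$ is a connected set in $V_j$ whose final vertex is adjacent to $v_{ij}\in V_i$. So every pair of branch sets is joined by an edge, giving a $K_t$-minor. The main obstacle, and the step I would spend the most care on, is exactly this disjointification of the $V_i$: controlling the intersections $B_i\cap B_j$ (which the $2$-shattering hypothesis only partially constrains) and showing one can always reroute the shortest paths inside the relevant balls so that the branch sets become pairwise disjoint while staying connected. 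I expect this to go through by picking, for each $i$, the paths $P_{ij}$ ($j\ne i$) to form a BFS-tree of $B_i$ rooted at $c_i$ (so that within $B_i$ the chosen paths already form a tree, hence any two of them that meet share a common prefix), and then resolving inter-branch collisions at the leaves $v_{ij}$ by an ownership convention together with the fact that $v_{ij}\notin B_k$ for $k\ne i,j$, so $v_{ij}$ can only ever be contested between $V_i$ and $V_j$.
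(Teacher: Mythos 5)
There is a genuine gap, and it is exactly the step you flag yourself as unresolved: making the branch sets pairwise disjoint. Your setup is the same as the paper's (centres $c_i$, radii $r_i$, witnesses $v_{ij}$ lying in $B_i\cap B_j$ and in no other ball of the family, and paths from the centres through the witnesses), but the containment $V_i\subseteq B_i$ buys you essentially nothing: the $2$-shattering hypothesis only guarantees the \emph{existence} of a vertex in exactly $B_i\cap B_j$ for each pair; it places no restriction on the other vertices, which may lie in arbitrarily many balls of the family, so a shortest $c_i$--$v_{ij}$ path and a shortest $c_k$--$v_{kl}$ path can perfectly well cross. None of your proposed repairs closes this. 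Contracting overlapping sets is not a legal minor operation. A BFS tree rooted at $c_i$ only controls intersections among paths emanating from the \emph{same} centre, i.e.\ within a single $V_i$, which was never the problem. The ownership convention using $v_{ij}\notin B_k$ only disambiguates the witness vertices themselves, not the interior vertices of the paths, which is where the real collisions occur. And ``rerouting inside the relevant balls'' is precisely the claim that needs proof.

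What actually makes the argument work --- and what is missing from your proposal --- is a collection of triangle-inequality contradictions exploiting the \emph{exclusion} part of the witness property, $d(x_{kl},v_i)>r_i$ for $i\notin\{k,l\}$. Concretely, the paper shows: (i) if $\{i,j\}\cap\{k,l\}=\emptyset$ then the good paths $P_{ij}$ and $P_{kl}$ cannot meet at all, because a common vertex $u$ would let you splice a walk from $v_i$ to $x_{kl}$ of length at most $r_i$, contradicting $x_{kl}\notin B_i$; (ii) if $P_{ij}$ and $P_{ik}$ meet at $z$, the witnesses $x_{ij},x_{ik}$ cannot both lie on the $v_i$-side of $z$, and no vertex lies on three good paths $P_{ij},P_{ik},P_{jk}$. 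These facts let one label every intersection vertex with a single index $i$, prove that each label class $C_i$ is connected and contains $v_i$, and conclude that the $C_i$ together with the now pairwise-disjoint remaining path segments form a $K_t$-minor. Your proposal has the right raw ingredients but omits all of these distance arguments, which are the substance of the proof; as written it does not establish the theorem.
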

%\begin{corollary}
%If $G$ is $K_m$-minor free, the $2$-VC dimension of the dual of $\cH(G)$ is at most $d-1$.
%\end{corollary}
\begin{proof}
To prove Theorem~\ref{thm:2-VC-minorfree}, we adapt the proof of~\cite{BT15} for distance $2$-VC dimension to the dual distance $2$-VC dimension, and prove the following.

Let $\{(v_1,r_1),\ldots,(v_t,r_t)\}$ be a $2$-shattered set in the dual of $\cH(G)$. Then, for all $i,j$, there exists $x_{ij}$ such that:
\begin{itemize}
\item $d(x_{ij},v_i)\leq r_i$
\item $d(x_{ij},v_j)\leq r_j$
\item $d(x_{ij},v_k)>r_k$ if $k\notin\{i,j\}$
\end{itemize}

For any such $x_{ij}$, a path formed by a path $P$ between $v_i$ and $x_{ij}$ and a path $P'$ between $x_{ij}$ and $v_j$ such that $\ell(P)\leq r_i$ and $\ell(P')\leq r_j$ is called a {\em good $ij$-path}. For a path $P$ and two vertices $x,y$ in $P$, we denote by $P[x,y]$ the subpath of $P$ between $x$ and $y$.

\begin{claim}\label{lem:four_distincts}
If $i$, $j$, $k$, $l$ are distinct and $P_{ij}$, $P_{kl}$ are two good paths, then $P_{ij}\cap P_{kl}=\emptyset$.
\end{claim}
\claimproof
Let $P_i:=P_{ij}[v_i,x_{ij}]$ be the path from $v_i$ to $x_{ij}$ and let $P_k:=P_{kl}[v_k,x_{kl}]$ be the path from $v_k$ to $x_{kl}$. Suppose for contradiction that there exists $u\in P_{ij}\cap P_{kl}$. Assume  without loss of generality that  $u\in P_{i}\cap P_{k}$ and that $\ell(P_{k}[u,x_{kl}])\leq \ell(P_{i}[u,x_{ij}])$. Then,  since $\ell(P_{i}) = \ell(P_{i}[v_i,u])+\ell(P_{i}[u,x_{ij}]) \leq r_i$, we have $d(v_i,x_{kl})\leq \ell(P_{i}[v_i,u])+\ell(P_{i}[u,x_{kl}])\leq r_i$ which is a contradiction.~\smallqed

\medskip

\begin{claim}\label{lem:part}
If $i$, $j$, $k$ are distinct and $P_{ij}$, $P_{ik}$ are two good paths that intersect in $z$, then $x_{ij}$ and $x_{ik}$ cannot both be in the part of $P_{ij}$ (resp. $P_{ik}$) that is between $v_i$ and $z$. Hence, at least one of $x_{ij}\in P_{ij}[z,v_j]$ or $x_{ik}\in P_{ik}[z,v_k]$ is true.
\end{claim}
\claimproof
Suppose, to the contrary, that both $x_{ij}$ and $x_{ik}$ are between $z$ and $v_i$ and assume without loss of generality that $\ell(P_{ij}[z,x_{ij}])\leq \ell(P_{ik}[z,x_{ik}])$. Then we have $$d(v_k,x_{ij})\leq \ell(P_{ik}[v_k,z])+\ell(P_{ij}[z,x_{ij}])\leq \ell(P_{ik}[v_k,z])+ \ell(P_{ik}[z,x_{ik}])= \ell(P_{ik}[v_k,x_{ik}])\leq r_k$$ contradicting the fact that $d(v_k,x_{ij})>r_k$.~\smallqed

\medskip

\begin{claim}\label{lem:three_distincts}
If $i$, $j$, $k$ are distinct and $P_{ij}$, $P_{ik}$ and $P_{jk}$ are three good paths, then $P_{ij}\cap P_{ik}\cap P_{jk}=\emptyset$.
\end{claim}
\claimproof
Let $z\in P_{ij}\cap P_{ik}\cap P_{jk}$. Assume without loss of generality
that $$\ell(P_{ij}[z,x_{ij}])=min(\ell(P_{ij}[z,x_{ij}]),\ell(P_{ik}[z,x_{ik}]),\ell(P_{jk}[z,x_{jk}]))$$ Assume furthermore that  $x_{ij}\in P_{ij}[v_i,z]$. By Claim~\ref{lem:part}, $x_{ik}\in P_{ik}[z,v_k]$ and $x_{jk}\in  P_{jk}[z,v_j]$. Now we have $$ d(v_k,x_{ij})\leq
\ell(P_{jk}[v_k,z])+\ell(P_{ij}[z,x_{ij}]) \leq
\ell(P_{jk}[v_k,z])+\ell(P_{jk}[z,x_{jk}])= \ell(P_{jk}[v_k,x_{jk}])\leq
r_k $$ contradicting the fact that $d(v_k,x_{ij})>r_k$.~\smallqed

\medskip

For all $x\in V$, we give label $i$ to $x$ if there exists two good paths $P_{ij}$ and $P_{ik}$ that intersects in $x$. Note that $v_i$ has label $i$.

\begin{claim}
For all $x\in V$, $x$ has at most one label.
\end{claim}
\claimproof
Let $P_{ij}$ and $P_{kl}$ be two good paths containing $x$. By
Claim~\ref{lem:four_distincts} we have $\{i,j\}\cap \{l,k\}\neq \emptyset$.
Assume without loss of generality that $i=k$. Assume now that there exists a
third good path $P_{mn}$ containing $x$. We show that  $i\in \{m,n\}$.
Suppose, to the contrary, that $i\neq m$ and $i\neq n$. Since $x\in P_{ij} \cap P_{mn}$, by Claim
\ref{lem:four_distincts}, either $m=j$ or $n=j$ (say $m=j$). Now since  $x\in
P_{il} \cap P_{mn}$, we have that $n=l$. But then $x\in P_{ij}\cap P_{il} \cap
P_{jl}$ which is in contradiction with Claim~\ref{lem:three_distincts}. So every
good path containing $x$ is a good path from  $v_i$ and then $x$ has only
label $i$.~\smallqed

\medskip

Let $C_i$ be the set of vertices that are labeled $i$. Since $v_i$ has label $i$, $C_i$ is non-empty.

\begin{claim}
For all $i\leq d$, $C_i$ induces a connected subgraph.
\end{claim}
\claimproof
We will prove that for each vertex $u\in C_i$ , there exists a path in
$C_i$ from $u$ to $v_i$. Assume that $u \in P_{ij}\cap P_{il}$. By Claim~\ref{lem:part} either $x_{ij} \in P_{ij}[v_i,u]$ or $x_{il} \in
P_{il}[v_i,u]$. Assume without loss of generality that $x_{ij} \in
P_{ij}[v_i,u]$. By definition of $x_{il}$, $r_j< d(v_j,x_{il})\leq
\ell(P_{ij}[u,v_j])+\ell(P_{il}[u,x_{il}])$ and since
$\ell(P_{ij}[u,v_j])+\ell(P_{ij}[x_{ij},u]) \leq r_j$, we have
$\ell(P_{il}[u,x_{il}])> \ell(P_{ij}[x_{ij},u])$.  Then since
$\ell(P_{il}[v_i,u])+\ell(P_{il}[u,x_{il}])=\ell(P_{il}[v_i,x_{il}])\leq
r_i$ we have $\ell(P_{il}[v_i,u])+\ell(P_{ij}[u,x_{ij}])\leq r_i$. Thus
$P_{il}[v_i,u]\cup P_{ij}[u,x_{ij}]\cup P_{ij}[x_{ij},v_j]$ is a good
$ij$-path. We conclude that all vertices of $P_{il}[v_i,u]$ have label $i$
(that is, $P_{il}[v_i,u]\subseteq C_i$).~\smallqed

\medskip

Assume now that $\{(v_1,r_1),\ldots,(v_t,r_t)\}$ is a $2$-shattered set in the dual of $\cH(G)$. Then the sets $C_i$ form  non-empty connected disjoint sets of vertices and there are disjoint paths between any pair of such sets. Thus there is a minor $K_t$, completing the proof of Theorem~\ref{thm:2-VC-minorfree}.
\end{proof}

\subsection{Outerplanar graphs}\label{sec:outerplanar}

Outerplanar graphs are $K_4$-minor-free and have treewidth at most~$2$. Hence, by Theorem~\ref{thm:minorfree}, $n=\mathcal{O}(d^3k^3)$ and by Corollary~\ref{coro:bounded-treewidth}, $n=\mathcal{O}(d^9k)$ . We will improve these bounds using a different method.

\begin{theorem}\label{prop:boundouterplanar}
If $G$ is an outerplanar graph with diameter $d$ and a resolving set of size~$k$, then $G$ has order at most $2kd^2-2d^2+d+1=\mathcal{O}(kd^2)$.
\end{theorem}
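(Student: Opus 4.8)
The plan is to mimic the analysis used for trees in Theorem~\ref{thm:trees}, but adapted to the ``thick tree'' structure of outerplanar graphs. Fix a resolving set $S=\{x_1,\dots,x_k\}$ and a BFS layering of $G$ from a central vertex (or pair of central vertices) $C$, so every vertex lies at distance at most $\lceil d/2\rceil$ from $C$. First I would use outerplanarity to control how the graph hangs off the ``skeleton'' formed by shortest paths from $C$ to the $x_i$. Concretely, consider the union $H$ of a fixed shortest $C$--$x_i$ path for each $i$; this is a connected subgraph with at most $k$ branches, and of total length $\mathcal{O}(kd)$. The key structural claim is that every vertex of $G$ lies ``close'' (within distance $\mathcal{O}(d)$ measured along the outer cycle) to $H$: since $G$ is outerplanar, removing $H$ leaves ``flaps'' each of which is separated from $S$ by at most two vertices of $H$ (the two attachment points of a flap on the outer face), so the distance vector of every vertex in a flap to $S$ is determined by its distances to just those two attachment vertices plus a bounded amount of extra information.

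The counting step then runs as follows. For a flap $F$ attached at two skeleton vertices $a,b$, every vertex $v\in F$ has $d(v,x_i)\in\{d(a,x_i)+d(v,a),\ d(b,x_i)+d(v,b)\}$ up to small slack, so $v$'s distance vector to $S$ is essentially determined by the pair $(d(v,a),d(v,b))$, each of which ranges over $\{0,1,\dots,d\}$. Since $S$ resolves $G$, each flap contains $\mathcal{O}(d^2)$ vertices; but I expect the true bound to be much tighter — roughly $d+1$ vertices per flap — because within a flap the two quantities $d(v,a)$ and $d(v,b)$ are themselves linked (their sum cannot exceed the length of the portion of the outer cycle bounding $F$, which is $\mathcal{O}(d)$), exactly as in the tree case where $|V(T_v)|\le r-d(v,C)+1$. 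Then I would sum over all flaps: there are $\mathcal{O}(kd)$ edges/vertices on the skeleton $H$, hence $\mathcal{O}(kd)$ flaps, each contributing $\mathcal{O}(d)$ vertices, plus $|V(H)|=\mathcal{O}(kd)$, giving $n=\mathcal{O}(kd^2)$. To get the precise constant $2kd^2-2d^2+d+1$ one has to be careful: the number of ``useful'' skeleton positions is about $(k-1)$ times a path of length roughly $d/2$ on each side, and flaps of maximal size can only occur near the center, so the sum telescopes to something like $(k-1)\cdot 2\cdot\sum_{j}(\text{length }j\text{ flap})$, matching the claimed formula.

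The main obstacle is the geometric bookkeeping of how flaps attach to the skeleton in an outerplanar embedding — in particular, ensuring that a flap really is separated from $S$ by only two skeleton vertices, and that these separators lie on the skeleton paths $P_i$ rather than deep inside another flap. Here I would exploit that in an outerplanar graph, fixing the outer cycle, any $2$-cut that separates a piece of the graph consists of two vertices consecutive along some chord, and that shortest paths in outerplanar graphs are ``geodesically convex'' enough that $H$ meets each flap in a connected (hence $\le 2$-vertex) boundary. A secondary subtlety, again paralleling the odd-$d$ case of Theorem~\ref{thm:trees}, is handling the center: whether $C$ is a single vertex or an edge changes the low-order terms, which is presumably why the stated bound has the shape $2kd^2-2d^2+d+1$ rather than a clean $\mathcal{O}(kd^2)$ expression; I would treat the central flap(s) separately and absorb them into the additive $d+1$. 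Once the separator structure is pinned down, the distance-vector counting and the summation are routine and should reproduce the claimed exact bound.
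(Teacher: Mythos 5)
Your high-level plan (a skeleton $H$ of shortest paths from a centre to the resolving vertices, plus ``flaps'' hanging off it, counted as in Theorem~\ref{thm:trees}) does not survive contact with the extremal examples, and two of its key steps are false rather than merely unproved. First, a connected component $F$ of $G-V(H)$ in an outerplanar graph need not be separated from $S$ by two vertices of $H$: in a fan (a path $v_1,\dots,v_m$ together with a vertex $u$ adjacent to every $v_j$, which is outerplanar), the component $\{u\}$ attaches to the path at $m$ vertices, and no two vertices of the path separate $u$ from an $x_i=v_j$ lying strictly inside it. Second, and more damagingly, your hoped-for bound of roughly $d+1$ vertices per flap is contradicted by the tight examples $O_{d,k}$ of this very section: the shortest path from the centre $x$ to $s_i$ uses only one arc of the $i$-th cycle, and the entire opposite arc together with all its pendant paths forms a \emph{single} component of $G-V(H)$ with $\Theta(d^2)$ vertices. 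Your fallback count of $\mathcal{O}(d^2)$ vertices per flap combined with $\mathcal{O}(kd)$ flaps yields only $\mathcal{O}(kd^3)$; to recover $\mathcal{O}(kd^2)$ you would have to recurse inside the large flaps, which is exactly the part of the argument you leave open. Relatedly, knowing only that $d(v,a)+d(v,b)=\mathcal{O}(d)$ for the two attachment points $a,b$ still allows $\Theta(d^2)$ distinct pairs $(d(v,a),d(v,b))$, so the ``linkage'' you invoke does not by itself bring the per-flap count down to $\mathcal{O}(d)$; that identity pins down the pair only when $F$ is an induced path between $a$ and $b$, which is essentially the tree case.

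For comparison, the paper's proof avoids any decomposition into flaps. It fixes one resolving vertex $s_1$, takes a circular (one-page) layout of $G$, and partitions $V(G)$ into the distance layers $L_1,\dots,L_d$ from $s_1$. The whole work goes into Claim~\ref{lem:orderrespect}: for every $s\in S$ and every layer $L_i$, the function $v\mapsto d(s,v)$ restricted to $L_i$ is monotone as one moves away, in the circular order, from a vertex of $L_i$ closest to $s$; this is proved via the crossing-paths property of the circular layout (Claim~\ref{clm:intersect}). Consequently each $s\neq s_1$ cuts $L_i$ into at most $2d+1$ intervals of constant distance, the set $S\setminus\{s_1\}$ cuts it into at most $2d(k-1)+1$ parts, each part contains at most one vertex because $S$ is resolving (all of $L_i$ already agrees on $s_1$), and summing over layers gives $n\le 2d^2(k-1)+d+1$ exactly. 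If you want to salvage your approach, the monotonicity statement of Claim~\ref{lem:orderrespect} is the structural fact you would need to prove in any case; once you have it, the layering argument is both shorter and sharper than the skeleton-and-flaps bookkeeping.
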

\begin{proof}
Let $S$ be a resolving set of $G$ of size $k$ and let $s_1\in S$. We consider a circular layout of $G$, that is, a planar representation of $G$ with all the vertices lying on the boundary of a circle $\cC$ (it is not difficult to see that such a layout exists, see~\cite{ST99}). The vertices of $G$ can be naturally ordered following $\cC$ and starting by $s_1$. We denote this order by $<$.

\begin{claim}\label{clm:intersect}
Let $x<y<z<t$ be four vertices of $G$. Let $P_1$ be a path from $y$ to $t$ and $P_2$ be a path from $x$ to $z$. Then $P_1$ and $P_2$ must intersect.
\end{claim}
\claimproof
Indeed, the drawing of the path $P_2$ cuts the disk formed by $\cC$ into two disjoint components and the vertices $y$ and $t$ are not in the same component. Therefore, the drawing of the path $P_1$ must intersect $P_2$, and since the representation is planar, it must be on a vertex.~\smallqed

\medskip

For each $1\leq i \leq d$, we define $L_i$ to be the set of vertices at distance exactly $i$ of $s_1$. The following claim is key to our proof.

\begin{claim}\label{lem:orderrespect}
Let $i\in\{1,...,d\}$.
Let $s\in S$ and $y$ a vertex of $L_i$ that minimizes the distance between $s$ and vertices of $L_i$.
Let $u$ and $v$ be two vertices of $L_i$. If $y<u<v$ or $v<u<y$, then, $d(s,u)\leq d(s,v)$.
\end{claim}
\claimproof
We assume that $s_1<y<u<v$ (the other case is symmetric).

We first prove that $d(y,u)\leq d(y,v)$. Let $P_1$ be a shortest path from $y$ to $v$ and $P_2$ be a shortest path from $u$ to $s_1$. By Claim~\ref{clm:intersect}, $P_1$ and $P_2$ must intersect in some vertex $z$. Since $P_2$ is a shortest path from $u$ to $s_1$, it has length $i$ and $d(z,s_1) \leq i$, and so $z\in L_j$ with $j\leq i$. Furthermore, we have $d(z,v)\geq i-j=d(z,u)$. Let $P$ be the path from $y$ to $u$ that consists of the subpath of $P_1$ from $y$ to $z$, followed by the subpath of $P_2$ from $z$ to $u$. Since $d(z,u)\leq d(z,v)$, the path $P$ is not longer than $P_1$ and thus $d(y,u)\leq d(y,v)$.

This proves the claim when $s\in L_i$. Assume now that $s\in L_j$ and that $j<i$. Let $P_1$ be a path formed by the union of a shortest path $P_{1,1}$ from $y$ to $s$ and a shortest path $P_{1,2}$ from $s$ to $v$. Let $P_2$ be a shortest path from $u$ to $s_1$. By Claim~\ref{clm:intersect}, $P_1$ and $P_2$ must intersect in $z$ and as before, $z\in L_k$ with $k\leq i$. If $z$ belongs to $P_{1,1}$, that is, to a shortest path between $y$ and $s$, then the path from $s$ to $u$ following $P_{1,1}$ until $z$ and then $P_2$ until $u$ is shorter than $P_{1,1}$ (since $d(z,u) \leq d(z,y)$). Hence $d(s,u)\leq d(s,y)$ but $y$ is minimizing the distance between $s$ and a vertex of $L_i$. Therefore, $d(s,y)=d(s,u)\leq d(s,v)$. Otherwise, $z$ must belong to $P_{1,2}$, a shortest path between $s$ and $v$. Then the path $P$ from $s$ to $u$ that follow $P_{1,2}$ until $z$ and then $P_2$ until $u$ is shorter than $P_{1,2}$. Indeed, since $z\in L_k$, we have $d(z,u)=i-k$ and  $d(z,v)\geq i-k$. Thus $d(s,u)\leq d(s,v)$.

Assume finally that $s\in L_j$ with $j>i$. We have $d(s,y)=j-i$ (indeed, a shortest path from $s$ to $s_1$ must pass by a vertex $y'\in L_i$ and then $d(s,y')=j-i$). Let $P_1$ be a path formed by the union between a shortest path $P_{1,1}$ from $y$ to $s$ and a shortest path $P_{1,2}$ from $s$ to $v$. Let $P_2$ a shortest path from $u$ to $s_1$. Again, $P_1$ and $P_2$ must intersect in $z\in L_k$, with $k\leq i$. Since $P_{1,1}$ is a path of length $j-i$ between $L_i$ and $L_j$, all the vertices of $P_{1,1}$ are in a layer $L_{j'}$ with $i\leq j'\leq j$. It is not possible to have $z=y$ since in $P_2$ there is exactly one vertex by $L_{j'}$ for $j'\leq i$, and $u\neq y$ is this vertex for $j'=i$. Hence $z$ is in $P_{2,2}$. It means that there is a vertex $z'\in L_i$ on the path from $s$ to $z$: indeed when going from $L_j$ to $L_k$ a path must intersect all the layers between $L_k$ and $L_j$. We choose for $z'$ the first vertex of $L_i$ we meet on $P_{1,2}$ going from $s$ to $v$. If $z'<u<v$ then as in the first case of the proof, $d(z',u)\leq d(z',v)$ and so $d(s,u)\leq d(s,v)$ and we are done. Otherwise, we have $s_1 <y<u<z'$ and there is a path from $y$ to $z'$ (the path $P_1$ stopped in $z'$) that is not intersecting a path from $s_1$ to $u$, which contradicts Claim~\ref{clm:intersect}. This completes the proof of Claim~\ref{lem:orderrespect}.~\smallqed

\medskip

We can now finish the proof of Theorem~\ref{prop:boundouterplanar}. By Claim~\ref{lem:orderrespect}, each vertex $s\neq s_1$ of $S$ partitions the vertices of $L_i$ with respect to the order $<$ into at most $2d+1$ parts such that two vertices belonging to the same part have the same distance to $s$. Hence, together, the vertices of $S\setminus \{s_1\}$ partition $L_i$ into at most $2d(k-1)+1$ parts and the distance to $S$ of each vertex of $L_i$ is determined by its position in the partition. Hence, there is at most one vertex in each part, and thus $|L_i|\leq 2d(k-1)+1$. Finally, the total number of vertices of $G$ is at most $1+\sum_{i=1}^d|L_i|\leq 2d^2(k-1)+d+1$. This completes the proof of Theorem~\ref{prop:boundouterplanar}.
\end{proof}

%\todo[inline]{Aline: Note that we can slightly improve the constant $2$ to $7/4$. Indeed, a vertex $s$ in $L_j$ will actually divide $L_i$ into at most $2(d-|i-j|)+1$ parts and thus make in total at most $\frac{7}{4}d^2$ parts (the worse case being $s\in L_{i/2}$).}

We now show that Theorem~\ref{prop:boundouterplanar} is tight, up to a constant factor. For two integers~$d,k\geq 2$, let $O_{d,k}$ be the outerplanar graph constructed as follows. First, for some integer $i$, we define a graph $H_i$ as follows. Consider a cycle $C$ of length~$2i+1$, where $x$ is a distinguished vertex of $C$. To any vertex $v$ of $C$ at distance $j\geq 1$ of $x$ in $C$, we attach a path of length $i-j+1$ to $v$, and to one of the two vertices at distance~$i$ of $x$ in $C$, we attach a second leaf. Now, $O_{d,k}$ is built from $k-1$ copies of $H_{\lfloor d/2\rfloor-1}$ and one copy of $H_{\lceil d/2\rceil-1}$ identified at $x$, with an additional path of length $\lfloor d/2\rfloor$ attached to $x$. (Note that we may optionally add chords to the cycles in $O_{d,k}$, as long as the outerplanarity and the distances from each vertex having two leaves attached are preserved.) The order of $O_{d,k}$ is $\frac{d+2}{2}+k\left(2\sum_{i=1}^{d/2}i-1\right)$ when $d$ is even, and $\frac{3d+3}{2}+k\left(2\sum_{i=1}^{\lfloor d/2\rfloor}i-1\right)$ when $d$ is odd; this is $(\frac{1}{4}+o(1))kd^2$. See the graph of Figure~\ref{fig:extr-outerplanar} for an illustration of $O_{7,3}$ and $O_{8,3}$.

\begin{proposition}
Let $d,k\geq 2$ be two integers. The outerplanar graph $O_{d,k}$ has diameter $d$, metric dimension $k$ and order $(\frac{1}{4}+o(1))kd^2$.
\end{proposition}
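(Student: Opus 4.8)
\emph{Proof proposal.} Write $p=\lfloor d/2\rfloor$ and $q=\lceil d/2\rceil$, so $p+q=d$, and let $H^{(1)},\dots,H^{(k)}$ denote the $k$ gadgets used in the construction ($H^{(1)},\dots,H^{(k-1)}$ being copies of $H_{p-1}$ and $H^{(k)}$ a copy of $H_{q-1}$), all glued at the common vertex $x$, and let $Q$ be the pendant path of length $p$ attached to $x$. The plan rests on the observation that $x$ is a cut-vertex of $O_{d,k}$ whose removal leaves the components $H^{(1)}-x,\dots,H^{(k)}-x,Q-x$, so that $d(u,v)=d(u,x)+d(x,v)$ whenever $u,v$ lie in different components. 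I would first record the elementary distance facts inside one gadget $H_i$, using the names $x=c_0,c_1,\dots,c_i,c_i',\dots,c_1'$ for the cycle (with $c_ic_i'$ an edge), a pendant path of length $i-m+1$ at $c_m$ and at $c_m'$ for $1\le m\le i$, and two twin leaves at $c_i$: every vertex of $H_i$ is within distance $i+1$ of $x$, this being attained by the pendant-path endpoints and by the twin leaves; consequently the eccentricity of $x$ in $O_{d,k}$ is $q$.

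For the \emph{diameter}, I would prove $\mathrm{diam}(O_{d,k})\le d$ by cases. Two vertices in different components are at distance $d(u,x)+d(x,v)\le q+p=d$, since only $H^{(k)}$ contains vertices at distance more than $p$ from $x$; inside $Q$ all distances are $\le p$; inside a gadget $H_i$, going around the cycle shows $\mathrm{diam}(H_i)\le 2(i+1)$ (attained, for $i\ge 2$, by the endpoints of the pendant paths at $c_1$ and $c_1'$), which is $2p\le d$ for the small copies but $2q$ for $H^{(k)}$ --- equal to $d$ for even $d$ but possibly $d+1$ for odd $d$. The one delicate point is this odd case, where I would invoke the licence in the construction to add the chord $c_1c_1'$ to the cycle of $H^{(k)}$: this preserves outerplanarity, does not change $d(x,c_i)$ (hence none of the distances from vertices carrying two leaves), and reduces $\mathrm{diam}(H^{(k)})$ to $2q-1=d$, leaving a short verification for the other pairs inside $H^{(k)}$. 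Equality $\mathrm{diam}(O_{d,k})=d$ then follows since a pendant-path endpoint of $H^{(k)}$ and the endpoint of $Q$ lie in different components and are at distance $q+p=d$.

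For the \emph{metric dimension}, the bound $\mathrm{md}(O_{d,k})\ge k$ is immediate: in each $H^{(j)}$ the two leaves at $c_i$ are twins, and a twin pair can be separated only by including one of the two in a resolving set; these $k$ pairs are disjoint and avoid $x$. For the bound $\mathrm{md}(O_{d,k})\le k$ I would take $R=\{r_1,\dots,r_k\}$ with $r_j$ one of the two twin leaves of $H^{(j)}$ and prove $R$ is resolving, using two facts. (i) For every $v\ne x$ in $H^{(j)}$, a shortest $v$--$r_j$ path goes around the cycle rather than through $x$, so $d(v,r_j)<d(v,x)+d(x,r_j)$; while for $v\notin H^{(j)}$ one has $d(v,r_j)=d(v,x)+d(x,r_j)$ exactly. (ii) Inside a gadget $H_i$, two distinct vertices at the same distance from $x$ have distinct distances to the twin leaf $r$. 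With these, any two distinct $u,v$ are resolved: if they lie in the same gadget $H^{(j)}$, use $r_j$ when $d(u,x)=d(v,x)$ (by (ii)) and any $r_l$, $l\ne j$, otherwise (via $d(\cdot,r_l)=d(\cdot,x)+d(x,r_l)$); if they lie in $Q$, their distances to $x$ are pairwise distinct; if they lie in different components, at least one --- say $u\in H^{(j_0)}$ --- lies in a gadget, and e.g.\ when $d(u,x)\le d(v,x)$ we get $d(u,r_{j_0})<d(u,x)+d(x,r_{j_0})\le d(v,x)+d(x,r_{j_0})=d(v,r_{j_0})$, the remaining subcases being analogous; and $u=x$ is trivial. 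Hence $\mathrm{md}(O_{d,k})=k$.

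The \emph{order} has already been computed just before the statement: it is $\frac{d+2}{2}+k\bigl(2\sum_{i=1}^{d/2}i-1\bigr)=\frac{k}{4}d^2+O(kd)$ for even $d$ and $\frac{3d+3}{2}+k\bigl(2\sum_{i=1}^{\lfloor d/2\rfloor}i-1\bigr)=\frac{k}{4}d^2+O(kd)$ for odd $d$, which is $(\tfrac14+o(1))kd^2$ in both cases. The main obstacle I foresee is fact (ii): one must check, by a somewhat tedious case analysis according to whether the two vertices sit on the cycle or on a pendant path and on which side of the distinguished vertex $c_i$ they lie, that a \emph{single} off-centre leaf per gadget suffices to separate all pairs of that gadget equidistant from $x$; it is precisely the deliberately asymmetric placement of $r_j$ that makes this work, while fact (i) rules out collapses between vertices lying in different parts of the graph.
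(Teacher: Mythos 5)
Your proposal follows the same route as the paper: the lower bound $\mathrm{md}\ge k$ from the $k$ twin pairs, the upper bound by taking one off-centre leaf per gadget and splitting into cases via the cut-vertex $x$ (different components; same component, different distance to $x$; same component, equal distance to $x$), and a direct count for the order. Two points are worth recording. First, your treatment of the odd-$d$ diameter is not merely pedantic: the paper dismisses the diameter with ``follows from the definition,'' but for odd $d\ge 5$ the chord-free copy of $H_{\lceil d/2\rceil-1}$ really does contain two pendant-path endpoints (those attached at $c_1$ and $c_1'$) at distance $2\lceil d/2\rceil = d+1$, since any path between them must pass through $c_1$ and $c_1'$, which are at distance $2$ via $x$. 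So the chord $c_1c_1'$ in the large copy is in fact \emph{necessary}, not optional as the paper's parenthetical suggests, and your verification that it preserves outerplanarity and all distances from $x$ and from the twin-bearing vertex $c_i$ is exactly what is needed to keep the rest of the argument intact. Second, your cross-component case is stated more carefully than the paper's: the paper asserts $d(u,s_i)<d(v,s_i)$ whenever $u$ shares a component with $s_i$ and $v$ does not, which is false in general (take $u$ a far pendant endpoint of the large gadget and $v$ adjacent to $x$); your version, which compares $d(u,x)$ and $d(v,x)$ first and only then invokes the strict inequality $d(u,r_{j_0})<d(u,x)+d(x,r_{j_0})$, is the correct repair. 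The remaining burden you flag --- fact (ii), that a single leaf at $c_i$ separates all vertices of the gadget equidistant from $x$ --- does go through: the distances from that leaf to the vertices at distance $m$ from $x$ are $i+m+1-2j$ on the unprimed side and $i+m+2-2j$ on the primed side, which are pairwise distinct by a parity argument. So the proposal is correct and, on the diameter, more careful than the published proof.
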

\begin{proof}
The values of the diameter and the order follow from the definition. To see that the metric dimension is~$k$, consider the $k$ vertices that have two neighbours of degree~$1$. In order for these two neighbours to be distinguished, one of them needs to be in any resolving set. Now, we pick exactly one of them and repeat this for every such pair; we obtain a set $S$ of $k$ vertices. We claim that $S$ is a resolving set. Let $S=\{s_1,\ldots,s_k\}$ and let us call $C_i$ the component of $O_{d,k}-x$ containing $s_i$. (There are $k+1$ components in $O_{d,k}-x$, with one of them isomorphic to a path and not containing any vertex of $S$.) Any two vertices $u$ and $v$ from different components of $O_{d,k}-x$ are distinguished, since at least one of them (say $u$) has a vertex $s_i$ of $S$ in its component $C_i$, and $d(u,s_i)<d(v,s_i)$. Within a component $C$ of $O_{d,k}-x$, vertices with distinct distances to $x$ are distinguished by the vertices of $S$ not in $C$. Finally, vertices at the same distance of $x$ in a component $C_i$ are distinguished by $s_i$.
\end{proof}

\begin{figure}[ht!]
  \centering
\scalebox{0.8}{
  \subfigure[The graph $O_{7,3}$.]{
\begin{tikzpicture}[join=bevel,inner sep=0.6mm,line width=0.8pt, scale=0.3]
    \begin{scope}[rotate=90]

        \foreach \i in {1,...,4}{

          \ifnum \i<4
          \path (5+3*\i,0) node[draw, shape=circle] (a-\i-0) {};

          \pgfmathtruncatemacro{\k}{4-\i}
          \foreach \j in {1,...,\k}{
            \path (5+3*\i,-2*\j) node[draw, shape=circle] (a-p\i-\j) {};

            \ifnum \j=1
            \draw[line width=0.5pt] (a-p\i-\j)--(a-\i-0);
            \fi

            \ifnum \j>1
            \pgfmathtruncatemacro{\jj}{\j-1}
            \draw[line width=0.5pt] (a-p\i-\j)--(a-p\i-\jj);
            \fi
          }
          \fi

          \ifnum \i=4
          \path (5+3*\i,0) node[draw, shape=circle,fill] (a-\i-0) {};
          \fi

          \ifnum \i>1
          \pgfmathtruncatemacro{\ii}{\i-1}
          \draw[line width=0.5pt] (a-\i-0)--(a-\ii-0);
          \fi

        }

        \foreach \i in {1,2,3}{

          \path (5+3*\i-1,3) node[draw, shape=circle] (a-\i-1) {};

          \ifnum \i<3
          \draw[line width=0.5pt,dashed] (a-\i-1)--(a-\i-0);
          \fi
          \ifnum \i=3
          \draw[line width=0.5pt] (a-\i-1)--(a-\i-0);
          \fi

          \ifnum \i>1
          \pgfmathtruncatemacro{\ii}{\i-1}
          \draw[line width=0.5pt] (a-\i-1)--(a-\ii-1);
          \draw[line width=0.5pt,dashed] (a-\i-1)--(a-\ii-0);
          \fi

          \pgfmathtruncatemacro{\k}{4-\i}
          \foreach \j in {1,...,\k}{
            \path (5+3*\i-1,3+2*\j) node[draw, shape=circle] (a-pt\i-\j) {};

            \ifnum \j=1
            \draw[line width=0.5pt] (a-pt\i-\j)--(a-\i-1);
            \fi

            \ifnum \j>1
            \pgfmathtruncatemacro{\jj}{\j-1}
            \draw[line width=0.5pt] (a-pt\i-\j)--(a-pt\i-\jj);
            \fi
          }

        }

    \end{scope}

    \begin{scope}[rotate=210]

        \foreach \i in {1,...,3}{

          \ifnum \i<3
          \path (5+3*\i,0) node[draw, shape=circle] (b-\i-0) {};

          \pgfmathtruncatemacro{\k}{3-\i}
          \foreach \j in {1,...,\k}{
            \path (5+3*\i,-2*\j) node[draw, shape=circle] (b-p\i-\j) {};

            \ifnum \j=1
            \draw[line width=0.5pt] (b-p\i-\j)--(b-\i-0);
            \fi

            \ifnum \j>1
            \pgfmathtruncatemacro{\jj}{\j-1}
            \draw[line width=0.5pt] (b-p\i-\j)--(b-p\i-\jj);
            \fi
          }
          \fi

          \ifnum \i=3
          \path (5+3*\i,0) node[draw, shape=circle,fill] (b-\i-0) {};
          \fi

          \ifnum \i>1
          \pgfmathtruncatemacro{\ii}{\i-1}
          \draw[line width=0.5pt] (b-\i-0)--(b-\ii-0);
          \fi

        }

        \foreach \i in {1,2}{

          \path (5+3*\i-1,3) node[draw, shape=circle] (b-\i-1) {};

          \ifnum \i<2
          \draw[line width=0.5pt,dashed] (b-\i-1)--(b-\i-0);
          \fi
          \ifnum \i=2
          \draw[line width=0.5pt] (b-\i-1)--(b-\i-0);
          \fi

          \ifnum \i>1
          \pgfmathtruncatemacro{\ii}{\i-1}
          \draw[line width=0.5pt] (b-\i-1)--(b-\ii-1);
          \draw[line width=0.5pt,dashed] (b-\i-1)--(b-\ii-0);
          \fi

          \pgfmathtruncatemacro{\k}{3-\i}
          \foreach \j in {1,...,\k}{
            \path (5+3*\i-1,3+2*\j) node[draw, shape=circle] (b-pt\i-\j) {};

            \ifnum \j=1
            \draw[line width=0.5pt] (b-pt\i-\j)--(b-\i-1);
            \fi

            \ifnum \j>1
            \pgfmathtruncatemacro{\jj}{\j-1}
            \draw[line width=0.5pt] (b-pt\i-\j)--(b-pt\i-\jj);
            \fi
          }

        }

    \end{scope}

    \begin{scope}[rotate=330]

        \foreach \i in {1,...,3}{

          \ifnum \i<3
          \path (5+3*\i,0) node[draw, shape=circle] (c-\i-0) {};

          \pgfmathtruncatemacro{\k}{3-\i}
          \foreach \j in {1,...,\k}{
            \path (5+3*\i,-2*\j) node[draw, shape=circle] (c-p\i-\j) {};

            \ifnum \j=1
            \draw[line width=0.5pt] (c-p\i-\j)--(c-\i-0);
            \fi

            \ifnum \j>1
            \pgfmathtruncatemacro{\jj}{\j-1}
            \draw[line width=0.5pt] (c-p\i-\j)--(c-p\i-\jj);
            \fi
          }
          \fi

          \ifnum \i=3
          \path (5+3*\i,0) node[draw, shape=circle,fill] (c-\i-0) {};
          \fi

          \ifnum \i>1
          \pgfmathtruncatemacro{\ii}{\i-1}
          \draw[line width=0.5pt] (c-\i-0)--(c-\ii-0);
          \fi

        }

        \foreach \i in {1,2}{

          \path (5+3*\i-1,3) node[draw, shape=circle] (c-\i-1) {};

          \ifnum \i<2
          \draw[line width=0.5pt,dashed] (c-\i-1)--(c-\i-0);
          \fi
          \ifnum \i=2
          \draw[line width=0.5pt] (c-\i-1)--(c-\i-0);
          \fi

          \ifnum \i>1
          \pgfmathtruncatemacro{\ii}{\i-1}
          \draw[line width=0.5pt] (c-\i-1)--(c-\ii-1);
          \draw[line width=0.5pt,dashed] (c-\i-1)--(c-\ii-0);
          \fi

          \pgfmathtruncatemacro{\k}{3-\i}
          \foreach \j in {1,...,\k}{
            \path (5+3*\i-1,3+2*\j) node[draw, shape=circle] (c-pt\i-\j) {};

            \ifnum \j=1
            \draw[line width=0.5pt] (c-pt\i-\j)--(c-\i-1);
            \fi

            \ifnum \j>1
            \pgfmathtruncatemacro{\jj}{\j-1}
            \draw[line width=0.5pt] (c-pt\i-\j)--(c-pt\i-\jj);
            \fi

          }

        }

    \end{scope}

    \path (0,0) node[draw, shape=circle] (r) {};
    \draw (r) node[left=0.25cm] {$x$};

    \draw[line width=0.5pt] (r)--(a-1-0);
    \draw[line width=0.5pt] (r)--(a-1-1);

    \draw[line width=0.5pt] (r)--(b-1-0);
    \draw[line width=0.5pt] (r)--(b-1-1);

    \draw[line width=0.5pt] (r)--(c-1-0);
    \draw[line width=0.5pt] (r)--(c-1-1);

    %draw central path
    \foreach \i in {1,2,3}{
      \path (2*\i,1.5*\i) node[draw, shape=circle] (p-\i) {};
      \ifnum \i=1
      \draw[line width=0.5pt] (r)--(p-1);
      \fi
      \ifnum \i>1
      \pgfmathtruncatemacro{\ii}{\i-1}
      \draw[line width=0.5pt] (p-\ii)--(p-\i);
      \fi
    }
  \end{tikzpicture}}\qquad
  \subfigure[The graph $O_{8,3}$.]{
\begin{tikzpicture}[join=bevel,inner sep=0.6mm,line width=0.8pt, scale=0.3]
    \begin{scope}[rotate=90]

        \foreach \i in {1,...,4}{

          \ifnum \i<4
          \path (5+3*\i,0) node[draw, shape=circle] (a-\i-0) {};

          \pgfmathtruncatemacro{\k}{4-\i}
          \foreach \j in {1,...,\k}{
            \path (5+3*\i,-2*\j) node[draw, shape=circle] (a-p\i-\j) {};

            \ifnum \j=1
            \draw[line width=0.5pt] (a-p\i-\j)--(a-\i-0);
            \fi

            \ifnum \j>1
            \pgfmathtruncatemacro{\jj}{\j-1}
            \draw[line width=0.5pt] (a-p\i-\j)--(a-p\i-\jj);
            \fi
          }
          \fi

          \ifnum \i=4
          \path (5+3*\i,0) node[draw, shape=circle,fill] (a-\i-0) {};
          \fi

          \ifnum \i>1
          \pgfmathtruncatemacro{\ii}{\i-1}
          \draw[line width=0.5pt] (a-\i-0)--(a-\ii-0);
          \fi

        }

        \foreach \i in {1,2,3}{

          \path (5+3*\i-1,3) node[draw, shape=circle] (a-\i-1) {};

          \ifnum \i<3
          \draw[line width=0.5pt,dashed] (a-\i-1)--(a-\i-0);
          \fi
          \ifnum \i=3
          \draw[line width=0.5pt] (a-\i-1)--(a-\i-0);
          \fi

          \ifnum \i>1
          \pgfmathtruncatemacro{\ii}{\i-1}
          \draw[line width=0.5pt] (a-\i-1)--(a-\ii-1);
          \draw[line width=0.5pt,dashed] (a-\i-1)--(a-\ii-0);
          \fi

          \pgfmathtruncatemacro{\k}{4-\i}
          \foreach \j in {1,...,\k}{
            \path (5+3*\i-1,3+2*\j) node[draw, shape=circle] (a-pt\i-\j) {};

            \ifnum \j=1
            \draw[line width=0.5pt] (a-pt\i-\j)--(a-\i-1);
            \fi

            \ifnum \j>1
            \pgfmathtruncatemacro{\jj}{\j-1}
            \draw[line width=0.5pt] (a-pt\i-\j)--(a-pt\i-\jj);
            \fi
          }

        }

    \end{scope}

    \begin{scope}[rotate=210]

        \foreach \i in {1,...,4}{

          \ifnum \i<4
          \path (5+3*\i,0) node[draw, shape=circle] (b-\i-0) {};

          \pgfmathtruncatemacro{\k}{4-\i}
          \foreach \j in {1,...,\k}{
            \path (5+3*\i,-2*\j) node[draw, shape=circle] (b-p\i-\j) {};

            \ifnum \j=1
            \draw[line width=0.5pt] (b-p\i-\j)--(b-\i-0);
            \fi

            \ifnum \j>1
            \pgfmathtruncatemacro{\jj}{\j-1}
            \draw[line width=0.5pt] (b-p\i-\j)--(b-p\i-\jj);
            \fi
          }
          \fi

          \ifnum \i=4
          \path (5+3*\i,0) node[draw, shape=circle,fill] (b-\i-0) {};
          \fi

          \ifnum \i>1
          \pgfmathtruncatemacro{\ii}{\i-1}
          \draw[line width=0.5pt] (b-\i-0)--(b-\ii-0);
          \fi

        }

        \foreach \i in {1,2,3}{

          \path (5+3*\i-1,3) node[draw, shape=circle] (b-\i-1) {};

          \ifnum \i<3
          \draw[line width=0.5pt,dashed] (b-\i-1)--(b-\i-0);
          \fi
          \ifnum \i=3
          \draw[line width=0.5pt] (b-\i-1)--(b-\i-0);
          \fi

          \ifnum \i>1
          \pgfmathtruncatemacro{\ii}{\i-1}
          \draw[line width=0.5pt] (b-\i-1)--(b-\ii-1);
          \draw[line width=0.5pt,dashed] (b-\i-1)--(b-\ii-0);
          \fi

          \pgfmathtruncatemacro{\k}{4-\i}
          \foreach \j in {1,...,\k}{
            \path (5+3*\i-1,3+2*\j) node[draw, shape=circle] (b-pt\i-\j) {};

            \ifnum \j=1
            \draw[line width=0.5pt] (b-pt\i-\j)--(b-\i-1);
            \fi

            \ifnum \j>1
            \pgfmathtruncatemacro{\jj}{\j-1}
            \draw[line width=0.5pt] (b-pt\i-\j)--(b-pt\i-\jj);
            \fi
          }

        }

    \end{scope}

    \begin{scope}[rotate=330]

        \foreach \i in {1,...,4}{

          \ifnum \i<4
          \path (5+3*\i,0) node[draw, shape=circle] (c-\i-0) {};

          \pgfmathtruncatemacro{\k}{4-\i}
          \foreach \j in {1,...,\k}{
            \path (5+3*\i,-2*\j) node[draw, shape=circle] (c-p\i-\j) {};

            \ifnum \j=1
            \draw[line width=0.5pt] (c-p\i-\j)--(c-\i-0);
            \fi

            \ifnum \j>1
            \pgfmathtruncatemacro{\jj}{\j-1}
            \draw[line width=0.5pt] (c-p\i-\j)--(c-p\i-\jj);
            \fi
          }
          \fi

          \ifnum \i=4
          \path (5+3*\i,0) node[draw, shape=circle,fill] (c-\i-0) {};
          \fi

          \ifnum \i>1
          \pgfmathtruncatemacro{\ii}{\i-1}
          \draw[line width=0.5pt] (c-\i-0)--(c-\ii-0);
          \fi

        }

        \foreach \i in {1,2,3}{

          \path (5+3*\i-1,3) node[draw, shape=circle] (c-\i-1) {};

          \ifnum \i<3
          \draw[line width=0.5pt,dashed] (c-\i-1)--(c-\i-0);
          \fi
          \ifnum \i=3
          \draw[line width=0.5pt] (c-\i-1)--(c-\i-0);
          \fi

          \ifnum \i>1
          \pgfmathtruncatemacro{\ii}{\i-1}
          \draw[line width=0.5pt] (c-\i-1)--(c-\ii-1);
          \draw[line width=0.5pt,dashed] (c-\i-1)--(c-\ii-0);
          \fi

          \pgfmathtruncatemacro{\k}{4-\i}
          \foreach \j in {1,...,\k}{
            \path (5+3*\i-1,3+2*\j) node[draw, shape=circle] (c-pt\i-\j) {};

            \ifnum \j=1
            \draw[line width=0.5pt] (c-pt\i-\j)--(c-\i-1);
            \fi

            \ifnum \j>1
            \pgfmathtruncatemacro{\jj}{\j-1}
            \draw[line width=0.5pt] (c-pt\i-\j)--(c-pt\i-\jj);
            \fi

          }

        }

    \end{scope}

    \path (0,0) node[draw, shape=circle] (r) {};
    \draw (r) node[left=0.25cm] {$x$};

    \draw[line width=0.5pt] (r)--(a-1-0);
    \draw[line width=0.5pt] (r)--(a-1-1);

    \draw[line width=0.5pt] (r)--(b-1-0);
    \draw[line width=0.5pt] (r)--(b-1-1);

    \draw[line width=0.5pt] (r)--(c-1-0);
    \draw[line width=0.5pt] (r)--(c-1-1);

    %draw central path
    \foreach \i in {1,2,3,4}{
      \path (2*\i,1.5*\i) node[draw, shape=circle] (p-\i) {};
      \ifnum \i=1
      \draw[line width=0.5pt] (r)--(p-1);
      \fi
      \ifnum \i>1
      \pgfmathtruncatemacro{\ii}{\i-1}
      \draw[line width=0.5pt] (p-\ii)--(p-\i);
      \fi
    }
  \end{tikzpicture}}
}

  \caption{The graphs $O_{7,3}$ and $O_{8,3}$. Dashed edges are optional. Black vertices form an optimal resolving set.}
  \label{fig:extr-outerplanar}
\end{figure}
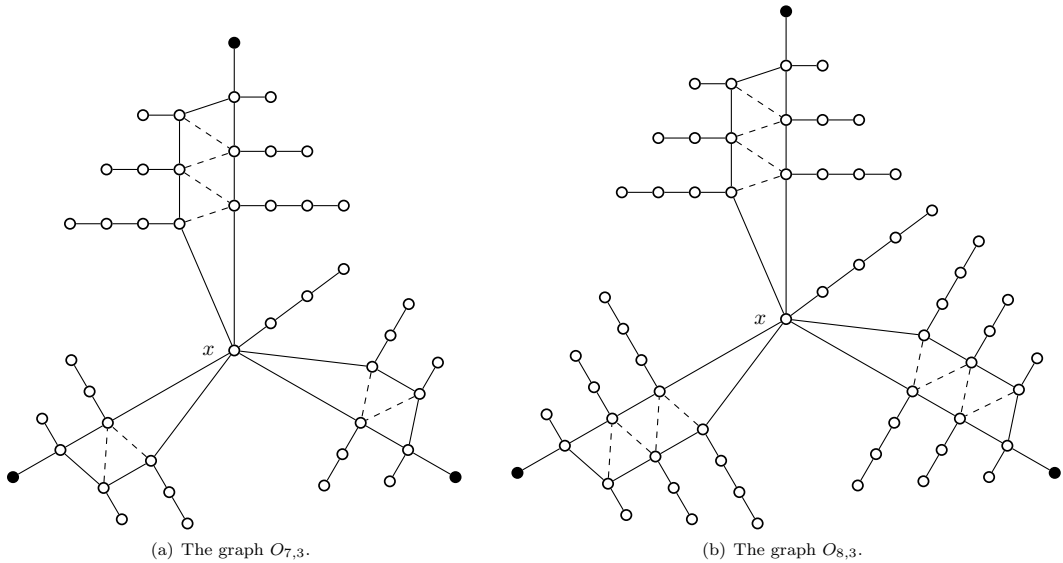

\section{Conclusion}\label{sec:conclu}

For trees and outerplanar graphs, we know that $n=\mathcal{O}(kd^2)$
and this is tight. We do not know whether our other bounds are
tight. It would be interesting to further study the classes of graphs
of fixed treewidth~$w$ (or the more restricted case of $w$-trees) and
the class of chordal graphs. We have proved that
$n=\mathcal{O}(kd^{3w+3})$ for constant $w$
(Corollary~\ref{coro:bounded-treewidth}) and $n=\mathcal{O}(f(k)d^2)$
for chordal graphs, where $f$ is doubly-exponential
(Corollary~\ref{coro:chordal}). Can these bounds be improved?
Moreover, Corollary~\ref{cor:minor-rankwidth-bounds} gives a bound in
terms of rankwidth. Trying to get a similar result in terms of
cliquewidth seems to be a natural follow-up.

Another interesting problem is to determine the best possible bound
for planar graphs, that is, whether our $n=\mathcal{O}(d^4k^4)$ bound
that follows from Theorem~\ref{thm:minorfree} can be improved. Note
that $n=\mathcal{O}(d^2)$ holds when the metric dimension is~$2$,
indeed in this case we have $n \leq d^2+2$ for any
graph~\cite{CEJO00,KRR96}. This quadratic bound is matched by any
square grid, which has metric dimension~$2$ and $n=d^2$. Nevertheless,
there are planar graphs with metric dimension $3$ and order
$\Theta(d^3)$. Such a family of graphs can be described as
follows. Pick any integer $t$ and consider $t$ disjoint copies $G_1,
G_2, \ldots G_t$ of a $t \times t$ grid. For $i$ between $1$ and
$t-1$, add an edge between the top left corners of $G_i$ and $G_{i+1}$
and another edge between the top right corners of $G_i$ and
$G_{i+1}$. The diameter of this graph is $4t$ and its order is
$t^3$. Moreover the top corners of $G_1$ together with the bottom left
corner of $G_t$ form a resolving set of size $3$. We do not know
whether there are planar graphs with small metric dimension and order
$\Theta(d^4)$.

For the smaller class of treewidth~$2$ graphs (that is, $K_4$-minor
free graphs), we know that $n=\mathcal{O}(d^3k^3)$
(Theorem~\ref{thm:minorfree}) and $n=\mathcal{O}(d^9k)$
(Corollary~\ref{coro:bounded-treewidth}), but we doubt that these
bounds are optimal. We remark that our proof method for outerplanar
graphs does not seem to be easily generalizable to this class.

\section*{Acknowledgements}

The authors are grateful to Julien Cassaigne who found the family of
planar graphs described in the conclusion, and to the anonymous
referees for their careful reading and constructive remarks.

%% \section{Questions (for our private usage)}

%% \begin{question}
%% Better bound for planar graphs? We think that if $d$ is the diameter and $k$ is the metric dimension, there should be at most $\mathcal{O}(d^2k)$ vertices in the graph.
%% A first thing to prove would be two prove that a planar graph with metric dimension 3 has order $\mathcal{O}(d^2)$.
%% \end{question}

%% \begin{question}
%% Do we have a converse of Proposition~\ref{md-vcdim} to complete the dichotomy ?
%% \end{question}

%% \begin{question}
%% Can we get rid of the diameter in Proposition~\ref{prop:md-vcdim-2} and have an expression only using the VC dimension of the hypergraph $\cH_\ell (G)$ (defined with the balls of radius $\ell$) ? Do we have example of graphs with different $\dVC$ and $\dVC^*$ ?
%% \end{question}

%% \begin{question}
%% If a class of graphs is stable by powers and has bounded VC dimension, does it have bounded distance-VC dimension ? It seems to be true for interval graphs (to prove properly).
%% \end{question}

%% \begin{question}
%% Relation with the rank width ? (the dual vc dimension is bounded, but can we have a better bound ?)
%% \end{question}

\end{document}